 \newtheorem{thm}{Theorem}[section]
 \newtheorem{cor}[thm]{Corollary}
 \newtheorem{lem}[thm]{Lemma}
 \newtheorem{prop}[thm]{Proposition}
 \theoremstyle{definition}
 \newtheorem{defn}[thm]{Definition}
 \newtheorem{prob}[thm]{Problem}
 \theoremstyle{remark}
 \newtheorem{rem}[thm]{Remark}
 \newtheorem{ex}{Example}
\numberwithin{equation}{section} \numberwithin{figure}{section}
\newcommand{\CC}{{\mathbb C}}
\newcommand{\DD}{{\mathbb D}}
\newcommand{\TT}{{\mathbb T}}
\newcommand{\RR}{{\mathbb R}}
\newcommand{\drestr}{\oslash}
\DeclareMathOperator{\Pol}{{\mathcal P}}
\DeclareMathOperator{\dist}{dist}
\begin{document}
\bibliographystyle{alpha}

\title[Cyclicity and extremal polynomials II]{Cyclicity in Dirichlet-type spaces and extremal polynomials II: functions on the bidisk}
\author[B\'en\'eteau]{Catherine B\'en\'eteau}
\address{Department of Mathematics, University of South Florida, 4202 E. Fowler Avenue, Tampa, FL 33620-5700, USA.}
\email{cbenetea@usf.edu}
\author[Condori]{Alberto A. Condori}
\address{Department of Mathematics, Florida Gulf Coast University,
10501 FGCU Boulevard South, Fort Myers, FL 33965-6565, USA.}
\email{acondori@fgcu.edu}
\author[Liaw]{Constanze Liaw}
\address{Department of Mathematics, Baylor University, One Bear Place \#97328, Waco, TX 76798-7328, USA.}
\email{Constanze$\underline{\,\,\,}$Liaw@baylor.edu}
\author[Seco]{Daniel Seco}
\address{Mathematics Institute, Zeeman Building, University of Warwick, Coventry CV4 7AL, UK.} \email{D.Seco@warwick.ac.uk}
\author[Sola]{Alan A. Sola}
\address{Centre for Mathematical Sciences,
University of Cambridge, Wilberforce Road, Cambridge CB3 0WB, UK.}
\email{a.sola@statslab.cam.ac.uk}
\thanks{Liaw is partially supported by the NSF grant DMS-1261687. Seco is supported by ERC Advanced Grant ``Local Structure of Sets, Measures and Currents". Sola acknowledges support from the EPSRC under grant EP/103372X/1.}
\date{\today}

\keywords{Cyclicity, Dirichlet-type spaces, optimal approximation, norm restrictions.}
\subjclass[2010]{Primary: 32A37. Secondary: 32A36, 47A16.}
\begin{abstract}
We study Dirichlet-type spaces $\mathfrak{D}_{\alpha}$ of analytic
functions in the unit bidisk and their cyclic elements. These are
the functions $f$ for which there exists a sequence
$(p_n)_{n=1}^{\infty}$ of polynomials in two variables such that
$\|p_nf-1\|_{\alpha}\to 0$ as $n\to \infty$. We obtain a number of
conditions that imply cyclicity, and 
obtain sharp estimates on
the best possible rate of decay of the norms $\|p_nf-1\|_{\alpha}$,
in terms of the degree of $p_n$, for certain classes of functions using results 
concerning Hilbert spaces of functions of one complex variable and comparisons
between norms in one and two variables.

We give examples of polynomials with no zeros on the bidisk that are not cyclic in $\mathfrak{D}_{\alpha}$ for $\alpha>1/2$ (including the Dirichlet space); this is in contrast with the one-variable case where
all non-vanishing polynomials are cyclic in Dirichlet-type spaces that are not algebras ($\alpha\le 1$). Further, we point out the necessity of a capacity zero condition on zero sets (in an appropriate sense) for cyclicity in the setting of the bidisk, and conclude by stating some open problems.
\end{abstract}

\maketitle


\section{Introduction}
\subsection{Dirichlet-type spaces on the bidisk}
We consider a scale of Hilbert spaces of holomorphic functions on the bidisk
\[\mathbb{D}^2=\{(z_1,z_2)\in \mathbb{C}^2\colon |z_1|<1, |z_2|<1\}\]
indexed by a parameter $\alpha\in (-\infty, \infty)$.
We say that a holomorphic function $f\colon \mathbb{D}^2\to \mathbb{C}$
belongs to the {\it Dirichlet-type space} $\mathfrak{D}_{\alpha}$ if its
power series expansion
\[f(z_1,z_2)=\sum_{k=0}^{\infty}\sum_{l=0}^{\infty}a_{k,l}z_1^kz_2^l\]
satisfies
\begin{equation}
\|f\|^2_{\alpha}=
\sum_{k=0}^{\infty}\sum_{l=0}^{\infty}(k+1)^{\alpha}(l+1)^{\alpha}|a_{k,l}|^2<\infty.
\label{JRnorm}
\end{equation}
Recall that a function of two complex variables is said to be {\it
holomorphic} if it is holomorphic in each variable separately. A
review of the definitions and basic properties such as power series
expansions can be found in \cite[Chapter 2]{HorBook}. Since zero
sets on the boundary of functions $f\in \mathfrak{D}_{\alpha}$ will
play a role later on, we point out
that the topological boundary of the bidisk is much larger than the 
so-called {\it distinguished boundary}
\[\mathbb{T}^2=\{(z_1,z_2)\in
\mathbb{C}^2\colon |z_1|=|z_2|=1\},\]
which is still large enough to support
standard integral representations and the
maximum principle on the bidisk.

The spaces $\mathfrak{D}_{\alpha}$ are a natural generalization to two variables of 
the classical Dirichlet-type spaces $D_{\alpha}$, $-\infty<\alpha<\infty$,
consisting of functions $f(z)=\sum_{k=0}^{\infty}a_kz^k$ that are
analytic in the unit disk $\mathbb{D}=\{z\in \mathbb{C}\colon |z|<1\}$ and
satisfy
\[\|f\|^2_{D_{\alpha}}=\sum_{k=0}^{\infty}(k+1)^{\alpha}|a_k|^2 < \infty;\]
see for instance \cite{Tay66} and \cite{BS84},  
and the references therein. As a remark on notation, we will
continue to use $\| \cdot \|_{\alpha}$ for
the norm of two variable functions in $\mathfrak{D}_{\alpha}$ while $\| \cdot \|_{D_{\alpha}}$
will denote the norm of one variable functions in $D_{\alpha}$. We
point out that the particular choice $\alpha=0$ in $D_{\alpha}$ and
$\mathfrak{D}_{\alpha}$ leads to the classical Hardy spaces $H^2$ on the
disk and bidisk, respectively, while $D_{-1}=A^2(\mathbb{D})$ and
$\mathfrak{D}_{-1}=A^2(\mathbb{D}^2)$ are the canonical Bergman spaces of the
disk and bidisk, and $D_1$ and $\mathfrak{D}_1$ are the Dirichlet spaces of the disk and bidisk, respectively.

The spaces $\mathfrak{D}_{\alpha}$ were studied in detail
by Jupiter and Redett in \cite{JR06}. Spaces of this type appear in the
earlier work of
Kaptano\u{g}lu \cite{Kap94}, which focuses on M\"obius invariance and boundary
behavior in Dirichlet-type spaces, and Hedenmalm \cite{Hed88}, which 
concentrates on closed ideals in function algebras. We note here (cf.
\cite[p. 343]{Kap94} and \cite[Section 4]{Hed88}), that an equivalent
norm for $\mathfrak{D}_{\alpha}$ is given by
\begin{multline*}
\|f\|^2_{\alpha}=|f(0,0)|^2+\int_{\mathbb{D}}|\partial_{z_1}[f(z_1,0)]|^2(1-|z_1|^2)^{1-\alpha}dA(z_1)\\+
\int_{\mathbb{D}}|\partial_{z_2}[f(0,z_2)]|^2(1-|z_2|^2)^{1-\alpha}dA(z_2)\\+
\int_{\mathbb{D}^2}|\partial_{z_2}\partial_{z_1}f(z_1,z_2)|^2(1-|z_1|^2)^{1-\alpha}(1-|z_2|^2)^{1-\alpha}dA(z_1)dA(z_2),
\end{multline*}
where $dA(z)=\pi^{-1}dxdy$ denotes area measure. The
proof involves computations with power series, and is omitted.

Extending the earlier one-variable work
of G.D. Taylor in \cite{Tay66}, Jupiter and Redett identified multipliers on
$\mathfrak{D}_{\alpha}$ and studied restriction properties of these spaces. It was also
shown in \cite{JR06} that evaluation at a point in $\mathbb{D}^2$ is a
bounded linear functional,
and hence $\mathfrak{D}_{\alpha}$
is a {\it reproducing kernel Hilbert space} for all $\alpha$. When $\alpha>1$, the
spaces $\mathfrak{D}_{\alpha}$ are actually {\it algebras} (viz.
the proof of \cite[Theorem 3.1]{JR06}) that are contained (as sets) in
$H^{\infty}(\mathbb{D}^2)$, the algebra of bounded holomorphic functions.
In particular, this implies that for $\alpha>1$, a function $f\in \mathfrak{D}_\alpha$ is cyclic if and only if it has no zeros on the closure of the bidisk.

It is clear from the definition of the norm in \eqref{JRnorm} that any
polynomial $p=p(z_1,z_2)$ belongs to $\mathfrak{D}_{\alpha}$. Moreover, any
$f\in D_{\alpha}$ lifts to $\mathfrak{D}_{\alpha}$ when regarded as constant
in one of the variables. In fact, if $g\in D_{\alpha}$ and $h\in D_{\alpha}$,
then the function \[f(z_1,z_2)=g(z_1)h(z_2), \quad (z_1,z_2)\in \mathbb{D}^2,\]
is analytic in the bidisk and belongs to $\mathfrak{D}_{\alpha}$ (see
\cite[Proposition 4.7]{JR06}), and so $\mathfrak{D}_{\alpha}$
certainly contains non-trivial holomorphic functions.

\subsection{Shift operators and cyclic functions}
In this paper, we are interested in a natural pair $\{S_1,S_2\}$ of
bounded linear operators acting on the spaces $\mathfrak{D}_{\alpha}$.
The {\it shift operators} $S_1$ and $S_2$ are defined by setting,
for $f\in \mathfrak{D}_{\alpha}$,
\begin{equation*}
S_1f(z_1,z_2)=z_1f(z_1,z_2) \quad \textrm{and}\quad S_2f(z_1,z_2)=z_2f(z_1,z_2).
\end{equation*}
It is then clear that $S_1$ and $S_2$ are linear, and it follows
from \eqref{JRnorm} that, for every $\alpha$,
$\{S_1,S_2\}$ forms 
a pair of bounded operators mapping $\mathfrak{D}_{\alpha}$
into itself.

It is a standard problem of operator theory to describe the invariant
subspaces of an operator. In the present context, we are interested in
closed subspaces $\mathcal{M}\subset \mathfrak{D}_{\alpha}$ such that
\begin{equation*}
S_1\mathcal{M}\subset \mathcal{M}\quad
\textrm{and}\quad S_2\mathcal{M}\subset \mathcal{M}.
\end{equation*}

As a first step towards understanding the invariant subspaces of the pair
$\{S_1,S_2\}$, we seek 
conditions under which a function $f\in \mathfrak{D}_{\alpha}$ is
{\it cyclic}, that is, 
\begin{equation*}
[f]=\overline{\textrm{span}\{z_1^kz_2^lf\colon k=0,1, \ldots; l=0, 1, \ldots\}}
=\mathfrak{D}_{\alpha}.
\end{equation*}

It is easy to see that there exists at least one cyclic function in each
$\mathfrak{D}_{\alpha}$, namely the function $f(z_1,z_2)=1$. This follows from the
fact that polynomials in two variables are dense in $\mathfrak{D}_{\alpha}$.
On the other hand, since norm convergence implies
uniform convergence on compact subsets, every $g\in [f]$ inherits any zeros $f$ may have inside $\mathbb{D}^2$, and so a necessary condition for cyclicity is that $f(z_1,z_2)\neq 0$, $(z_1,z_2)\in \mathbb{D}^2$. 
Note that since $g\in [f]$ implies $[g]\subset [f]$, an equivalent condition for $f$ to
be cyclic in $\mathfrak{D}_{\alpha}$ is that there exists a sequence of
polynomials $(p_n)_{n=1}^{\infty}$ of two variables with
\[\|p_nf-1\|_{\alpha}\to 0,\quad n\to \infty.\]
Since point evaluation is a bounded linear functional, this latter condition
is equivalent to the existence of a sequence of polynomials $(p_n)$ such that
\[p_n(z_1,z_2)f(z_1,z_2)-1\to 0, \quad (z_1,z_2)\in \mathbb{D}^2,\]
and
\[\|p_nf-1\|_{\alpha}\le C.\]
When $\alpha>1$ the spaces $D_{\alpha}$ and $\mathfrak{D}_{\alpha}$ 
are algebras, and cyclic functions have to be non-vanishing on
$\overline{\mathbb{D}}$ and $\overline{\mathbb{D}^2}$, respectively. 


In one variable, Beurling characterized the cyclic vectors of
$H^2(\mathbb{D})$: a function $f$ is cyclic if and only if it is
outer. 
In the bidisk, one can show that if $f\in H^2(\mathbb{D}^2)$,
or indeed if $f$ belongs to the Nevanlinna class, then $f$ has
(non-zero) radial limits at almost every $(\zeta_1,
\zeta_2)\in\mathbb{T}^2$.  Thus, 
we can declare $f\in H^2(\mathbb{D}^2)$ to be {\it outer} if
\begin{equation*}
\log|f(z_1,z_2)|=\int_{\mathbb{T}^2}\log|f(e^{i\theta},e^{i\eta})|P((z_1,z_2);(e^{i\theta}, e^{i\eta}))d\theta d\eta;
\end{equation*}
here, $P$ is the product Poisson kernel
\[P((z_1,z_2);(e^{i\theta}, e^{i\eta}))=P_{|z_1|}(\arg{z_1}-\theta)P_{|z_2|}(\arg{z_2}-\eta),\]
where $(z_1,z_2)\in \mathbb{D}^2$ and $\theta, \eta \in [0,2\pi)$.
As usual, $P_r(\theta)=(1-r^2)/(r^2-2r\cos(\theta)+1)^2$ denotes 
the Poisson kernel of the unit disk.

The cyclicity of $f\in H^2(\mathbb{D}^2)$ does
imply that $f$ is an outer function. But this condition is no longer
sufficient: there are outer functions that are not cyclic (see
\cite[Theorem 4.4.6]{RudBook}); this is another example of how the
higher-dimensional theory is somewhat different. (See however,
\cite{Man88} and \cite{RT10} for some positive results.)
\subsection{Overview of results.}
In the recent paper \cite{BCLSS13}, the problem of cyclicity in Dirichlet-type spaces 
in the unit disk was studied.  More specifically, the authors identified some subclasses of cyclic 
functions and derived sharp estimates on the rate of decay of the norms $\|p_nf-1\|_{\alpha}$ 
for such $f\in D_{\alpha}$.  It seems natural to investigate to what extent these results can 
be extended to functions $f\in\mathfrak{D}_{\alpha}$.

To make the notion of best possible norm decay precise, we let $\mathfrak{P}_n$, $n=1,2,\ldots$ be the subspaces of $\mathfrak{D}_{\alpha}$ consisting of polynomials of two variables of the form
\[p_n=\sum_{k=0}^n\sum_{l=0}^nc_{k,l}z_1^kz_2^l.\]
Note that we regard a monomial $z_1^kz_2^l$ in two variables as having degree $k+l$, meaning that members of $\mathfrak{P}_n$ are polynomials of degree at most $2n$.
Similarly, we denote by $\mathcal{P}_n$ the space of polynomials of one complex variable having degree at most $n$.
We now make the following definition.
\begin{defn}
Let $f \in \mathfrak{D}_\alpha$. We say that a polynomial $p_n \in \mathfrak{P}_n$ is an {\it optimal approximant} of order $n$ to $1/f$ if $p_n$ minimizes
$\|p f-1\|_\alpha$ among all polynomials $p \in \mathfrak{P}_n$.  We call
$\|p_{n}f-1\|_{\alpha}$ the \emph{optimal norm} of order $n$ associated with $f$.
\end{defn}

Stated differently, $p_n$ is an optimal approximant to $1/f$ if we have
\begin{equation*}
\|p_nf-1\|_{\alpha}=\dist_{\mathfrak{D}_{\alpha}}(1, f\cdot \mathfrak{P}_n);
\end{equation*}
here, $\dist_{X}(x,A)=\inf\{\|x-a\|_{X}\colon a\in A\}$ is the usual distance function
between a point and a subset $A\subset X$ of a normed space $X$.

Sharp estimates on the unit disk analog of $\dist_{\mathfrak{D}_{\alpha}}(1,f\cdot \mathfrak{P}_n)$ 
were obtained for certain classes of functions in the paper \cite{BCLSS13}.  To state
these estimates, we define $\varphi_1(s)=\log^+(s)$ for $s\in[0,\infty)$ and, when $\alpha<1$,
\[\varphi_{\alpha}(s)=s^{1-\alpha}, \quad s\in [0,\infty).\]
\begin{thm}[\cite{BCLSS13}, Theorem 3.7]\label{onevariableoptimal}
Let $\alpha\leq 1$.  If $f$ is a function admitting an analytic continuation to the
closed unit disk and whose zeros lie in $\CC\setminus\DD$, then there exists a constant
$C=C(\alpha,f)$ such that
\begin{equation*}
            \dist_{D_{\alpha}}^{2}(1,f\cdot\Pol_{m})
            \leq C\varphi_{\alpha}^{-1}(m+1)
\end{equation*}
holds for all sufficiently large $m$.  Moreover, this estimate is sharp in the sense that if
such a function $f$ has at least one zero on $\TT$, then there exists a constant
$\tilde{C}= \tilde{C}(\alpha,f)$ such that
\begin{equation*}
    \tilde{C} \varphi_{\alpha}^{-1}(m+1) \leq \dist_{D_{\alpha}}^{2}(1,f\cdot\Pol_{m}).
\end{equation*}
\end{thm}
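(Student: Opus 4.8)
The plan is to handle the upper and lower bounds separately, reducing each to a single one-variable extremal computation with the reproducing kernels of the polynomial subspaces $\Pol_n$. The whole argument rests on the elementary norm identity $\|z^j\|_{D_\alpha}^2=(j+1)^\alpha$, which shows that the reproducing kernel of $\Pol_n$ at a point $w$ satisfies $k_w^{(n)}(w)=\sum_{j=0}^n |w|^{2j}(j+1)^{-\alpha}$, together with the asymptotic $\sum_{j=0}^{n}(j+1)^{-\alpha}\asymp \varphi_\alpha(n+1)$ (a power of $n$ for $\alpha<1$, a logarithm for $\alpha=1$), whose reciprocal is exactly $\varphi_\alpha^{-1}(n+1)$. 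I would first dispose of the model case $f=1-\bar\zeta z$ with a single zero at $\zeta\in\TT$. Writing $q=pf-1$, the map $p\mapsto q$ is a bijection of $\Pol_m$ onto $\{q\in\Pol_{m+1}\colon q(\zeta)=-1\}$ (since $q+1$ vanishes at $\zeta$ iff it is divisible by $1-\bar\zeta z$), so minimizing $\|pf-1\|_\alpha$ becomes minimizing $\|q\|_\alpha$ subject to the single linear constraint $\langle q,k_\zeta^{(m+1)}\rangle=-1$. The minimizer is proportional to $k_\zeta^{(m+1)}$ and gives $\dist_{D_\alpha}^2(1,f\cdot\Pol_m)=1/k_\zeta^{(m+1)}(\zeta)=\big(\sum_{j=0}^{m+1}(j+1)^{-\alpha}\big)^{-1}\asymp\varphi_\alpha^{-1}(m+1)$, establishing both inequalities in the model.

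For the upper bound with general $f$, I would factor $f=B\cdot g$, where $B$ is the polynomial carrying the finitely many zeros of $f$ on $\TT$ (with multiplicity) and $g$ is analytic and zero-free on $\overline{\DD}$. Then $1/g$ is analytic on a neighborhood of $\overline{\DD}$, hence a multiplier of $D_\alpha$, and is approximable by Taylor polynomials $r$ with a geometrically small multiplier error. Taking $q\in\Pol_{m}$ near-optimal for $B$ (the rate $\varphi_\alpha^{-1}(m+1)$ coming from the model computation, extended to products and higher-order boundary zeros via the same kernel argument with Hermite-type interpolation data at the $\zeta_i$), I would set $p=rq$ and use the identity
\[
fp-1=(gr-1)(Bq)+(Bq-1).
\]
The triangle inequality bounds the first term by $\|gr-1\|_{\mathrm{mult}}\,\|Bq\|_\alpha$, which is negligible once $\deg r=O(\log m)$, while the second is $\le C\varphi_\alpha^{-1}(m+1)$; since $\deg p=\deg r+\deg q$ still fits in $\Pol_{C m}$ and $\varphi_\alpha^{-1}(Cm)\asymp\varphi_\alpha^{-1}(m)$, the claimed upper bound follows. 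The case with no boundary zero is easier still: $1/f$ is then a multiplier and the decay is geometric.

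For the lower bound, assume $f(\zeta)=0$ with $\zeta\in\TT$ and bound $\dist_{D_\alpha}(1,f\cdot\Pol_m)$ from below by testing $1-p_mf$ against a bounded linear functional that detects the value at $\zeta$. The natural candidates are evaluation at the interior point $r\zeta$, or the $N$-th partial-sum-at-$\zeta$ functional, whose squared norms are $\asymp\varphi_\alpha\big(1/(1-r)\big)$ and $\asymp\varphi_\alpha(N+1)$ respectively. Since $f(\zeta)=0$ forces $(1-p_mf)(\zeta)=1$ exactly and $1-p_mf$ is analytic on $\overline{\DD}$, Cauchy--Schwarz against such a functional yields $\|1-p_mf\|_\alpha^2\ge c\,\varphi_\alpha^{-1}(m+1)$, \emph{provided} the regularization parameter can be taken of order $m$ while the functional still registers a value bounded away from $0$. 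This last point is the main obstacle: one must control the tail of the Taylor series of $1-p_mf$ at $\zeta$ (equivalently, guarantee $|(1-p_mf)(r\zeta)|\ge c$ for $1-r\asymp 1/m$), and this hinges on an a priori growth estimate for the optimal approximants $p_m$ — polynomial growth of $\|p_m\|_\alpha$ is enough to balance $N\asymp m$ against the geometric decay of the tail. Establishing this growth bound and carrying out the balancing is the delicate step; the kernel asymptotics and the bookkeeping around it are routine by comparison.
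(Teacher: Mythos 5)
Two preliminary remarks. First, the paper you were compared against does not actually prove this theorem: it is imported verbatim from \cite{BCLSS13}, where the upper bound is obtained from the explicit Riesz-type means of the Taylor series of $1/f$ (the one-variable antecedent of \eqref{rieszpolys}) and sharpness is handled after factoring out the boundary zeros. So your proposal must stand on its own. Second, much of it does: the model computation is exactly right --- for $f=1-\bar\zeta z$ the map $p\mapsto pf-1$ is indeed a bijection of $\Pol_m$ onto $\{q\in\Pol_{m+1}\colon q(\zeta)=-1\}$, so $\dist^2_{D_\alpha}(1,f\cdot\Pol_m)=\bigl(\sum_{j=0}^{m+1}(j+1)^{-\alpha}\bigr)^{-1}\asymp\varphi_\alpha^{-1}(m+1)$ exactly --- and the factorization $f=Bg$, the geometric multiplier approximation of $1/g$, the identity $fp-1=(gr-1)(Bq)+(Bq-1)$, and the degree bookkeeping via $\varphi_\alpha^{-1}(Cm)\asymp\varphi_\alpha^{-1}(m)$ are all sound.

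The genuine gap is the lower bound, and you name it yourself: the entire sharpness argument hinges on an a priori growth estimate for the approximants $p_m$ that you never establish. Without \emph{any} bound on $\|p_m\|_{D_\alpha}$, the Cauchy--Schwarz step proves nothing, because the value $(1-p_mf)(\zeta)=1$ could, for all you have shown, be produced entirely by the Taylor tail that the truncated functional does not see; and sharpness is the substantive half of the theorem (the half this paper actually leans on, via Lemma \ref{optimality} and Theorems \ref{separable_growth} and \ref{sharpcyclicity}). Two ways to close it. (i) Your calibration is more demanding than necessary: since $f$ is analytic on a disk of radius $\rho>1$, the tail coefficients of $p_mf-1$ decay like $\rho^{-k}$ once $k>m$, so with cutoff $N=Bm$, $B$ large, \emph{any} exponential bound $\|p_m\|\le e^{Cm}$ suffices, and such a bound follows from $\|p_mf-1\|\le 1$ by a Remez/Tur\'an-type inequality on an arc where $|f|$ is bounded below. (ii) Cleaner: reroute through your own factorization so that no bound on $p_m$ is needed. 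Writing $r_m$ for the $m$-th Taylor polynomial of $1/g$, one has
\[
\|pB-r_m\|_{D_\alpha}\;\le\;\|1/g\|_{M(D_\alpha)}\,\|pf-1\|_{D_\alpha}+\|1/g-r_m\|_{D_\alpha},
\]
and now $pB-r_m$ \emph{is} a polynomial, of degree at most $m+\deg B$, whose value at $\zeta$ is $-r_m(\zeta)\to-1/g(\zeta)\neq 0$; your restricted-kernel estimate applies to it verbatim and gives $\|pf-1\|^2\gtrsim\varphi_\alpha^{-1}(m+1)$ for every $p\in\Pol_m$, since the Taylor remainder term decays geometrically.

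A secondary soft spot: for the upper bound you dispose of several zeros and higher multiplicities with the phrase ``Hermite-type interpolation data,'' but this extension is not automatic. The constrained minimum is $c^*G^{-1}c$ for the Gram matrix $G$ of evaluation and derivative kernels restricted to $\Pol_{m+\deg B}$, and these kernels do \emph{not} decouple after normalization: in $H^2$ at $\zeta=1$ the correlation between the normalized evaluation and first-derivative kernels tends to $\sqrt{3}/2$. One must therefore prove that the limiting correlation matrix is nonsingular. This is true --- e.g., for $f=(1-z)^2$ in $H^2$ the exact minimum is $2(2n+1)/((n+1)(n+2))\sim 4/n$, the same rate as for a simple zero with a worse constant --- but it is a computation, not a remark; this is precisely the point where the Riesz-mean construction of \cite{BCLSS13} earns its keep by giving the upper bound for all admissible $f$ at once.
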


In this paper, we obtain analogous theorems for certain subclasses
of functions in $\mathfrak{D}_{\alpha}$. We begin
Section \ref{productsection} with 
some general remarks concerning cyclicity in
$\mathfrak{D}_{\alpha}$. For instance, if $f$ is
cyclic, then each slice function $f_{z_j}$ obtained when fixing 
the variable $z_j$, $j=1$ or $2$, 
has to be cyclic in $D_{\alpha}$. 
Then the problem of cyclicity and rates associated with optimal approximants 
is addressed
for separable functions, i.e. for functions $f$ of the form
$f(z_1,z_2)=g(z_1)h(z_2)$. We prove that such a function is 
cyclic if and only if the factors $g$ and $h$ are 
cyclic in the one-variable space
$D_{\alpha}$, and then obtain, in Theorem \ref{separable_growth},
sharp estimates on $\dist_{\mathfrak{D}_{\alpha}}(1,f\cdot
\mathfrak{P}_n)$ under the assumption that $g$ and $h$ admit
analytic continuation to the closed disk
and have no zeros in $\mathbb{D}$.

In Section \ref{diagonalsection}, we turn our attention to functions
of the form $f(z_1,z_2)=f(z_1^M\cdot z_2^N)$, for integers $M,N\geq
1$, and again obtain cyclicity results and sharp estimates in
Theorem \ref{sharpcyclicity}. Our proofs are based on the fact that
certain restriction operators furnish isomorphisms between our
subclasses of functions in $\mathfrak{D}_{\alpha}$ and the
one-variable spaces $D_{\alpha}$, and on comparisons between the
associated norms.

In \cite{BCLSS13}, a key role was played by certain Riesz-type 
means of the power series expansion of $1/f$, which turned out to produce
optimal, or near optimal, approximants to $1/f$. The one-variable
construction extends to the bidisk setting as follows. Suppose $1/f$
has formal power series expansion
\[\frac{1}{f(z_1,z_2)}=\sum_{k=0}^{\infty}\sum_{l=0}^{\infty}b_{k,l}z_1^kz_2^l.\]
We then set
\begin{equation}
p_n(z_1,z_2)=\sum_{k=0}^{n}\sum_{l=0}^n\left(1-\frac{\varphi_{\alpha}(\max\{k,l\})}{\varphi_{\alpha}(n+1)}\right)b_{k,l}z_1^kz_2^l.
\label{rieszpolys}
\end{equation}
Note that when $\alpha=0$, the polynomials $p_n$ are simply the $n$th Ces\`aro
means of the Taylor series of $1/f$:
\begin{align*}
C_n(1/f)(z_1,z_2)&=\sum_{k=0}^{n}\sum_{l=0}^{n}\left(1-\frac{\max\{k,l\}}{n+1}\right)b_{k,l}z_1^kz_2^l\\&=\frac{1}{n+1}\sum_{m=0}^nt_m(1/f)(z_1,z_2),
\end{align*}
where $t_m$ denotes the $m$th order Taylor polynomial.  
In Section \ref{examplesSection}, we take a closer look at some concrete polynomials 
in two variables, and show that in some cases the polynomials \eqref{rieszpolys} are indeed close
to optimal.

Recall that in the case of the unit disk, any polynomial that is zero-free 
in $\mathbb{D}$ is cyclic in $D_{\alpha}$ for all $\alpha\leq 1$.  However, the analogous
statement for the bidisk need not hold.  In fact, we give examples of polynomials whose 
zero sets lie in $\mathbb{T}^2$ that are non-cyclic for $\alpha>1/2$, and also polynomials 
with zeros on the boundary of the bidisk that are cyclic for all $\alpha\leq 1$; in
fact, such polynomials can have zero sets that intersect $\mathbb{T}^2$, and extend into 
$\partial \mathbb{D}^2\setminus\mathbb{T}^2$.

The existence of non-cyclic polynomials in Hilbert spaces
of analytic functions in higher dimensions has also been observed by
Richter and Sundberg in setting of the Drury-Arveson space in 
the unit ball of $\mathbb{C}^d$ when $d\geq 4$; 
see \cite{RSslides} for this and other results on cyclic vectors in that 
context.

Many of our results and arguments carry over to the 
$d$-dimensional polydisk $\mathbb{D}^d$, but as notation becomes
much more cumbersome,  we restrict
our attention to functions on the bidisk.

\section{Classes of cyclic vectors in $\mathfrak{D}_{\alpha}$}\label{productsection}

In this section, we present some examples of cyclic functions in the bidisk.  As a 
preliminary example, we have already observed that $f(z_1,z_2)=1$ is cyclic in 
$\mathfrak{D}_{\alpha}$ for all $\alpha$,
and that cyclic functions cannot vanish inside the bidisk. Moreover, it 
is not difficult to see that if both $f$ and $1/f$
extend to a larger bidisk, then $f$ is non vanishing on the closure
$\overline{\mathbb{D}}^2$, and $f$ is cyclic; indeed, if $(p_{n})$ 
is a sequence of polynomials such that 
$\|p_n-1/f\|_{\alpha}$ tends to $0$, the estimate 
\[\|p_nf-1\|_{\alpha}\leq\|f\|_{M(\mathfrak{D}_{\alpha})}\|p_n-1/f\|_{\alpha},\]
where $\|\cdot\|_{M(\mathfrak{D}_{\alpha})}$ denotes the multiplier norm,
implies that $1\in[f]$ and so $f$ is cyclic.

However, there do exist cyclic functions in $\mathfrak{D}_{\alpha}$ that vanish on the 
boundary of the bidisk, as in the one variable case.
In this section, we focus on three different ways of building
functions in the bidisk from one variable functions in the unit disk, and explore the 
relationship between the cyclicity in two variables versus that in one variable.
First, let us make some preliminary remarks.

\subsection{Slices of a function}

For a function $f=f(z_1,z_2)$ in the bidisk, we can fix the variable $z_2$, say, and consider the {\it slice}
\[f_{z_2}(z_1)=f(z_1,z_2), \quad z_1\in \mathbb{D},\]
as a function in the unit disk.  The slice $f_{z_1}$ is defined in an analogous manner. With this in mind, the following simple fact holds.

\begin{prop}\label{slice}
If $f$ is cyclic in $\mathfrak{D}_{\alpha},$ then the slices $f_{z_2}$ and $f_{z_1}$ are cyclic in $D_{\alpha}.$
\end{prop}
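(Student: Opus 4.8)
The plan is to exploit the fact that fixing one of the two variables yields a bounded \emph{restriction} (or slice) operator from $\mathfrak{D}_{\alpha}$ into $D_{\alpha}$, and that cyclicity is automatically preserved under any bounded operator of this kind. By symmetry it suffices to treat $f_{z_2}$, the slice obtained by fixing $z_2=w$ for some $w\in\mathbb{D}$. First I would fix such a $w$ and define the linear map $R_w f(z_1)=f(z_1,w)$, which produces a one-variable analytic function in $z_1\in\mathbb{D}$.

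The key step is to check that $R_w$ maps $\mathfrak{D}_{\alpha}$ boundedly into $D_{\alpha}$. Writing $f(z_1,z_2)=\sum_{k,l}a_{k,l}z_1^kz_2^l$, the Taylor coefficients of $R_w f$ are $c_k=\sum_l a_{k,l}w^l$, and a Cauchy--Schwarz estimate gives
\[
|c_k|^2\le\Big(\sum_l (l+1)^{\alpha}|a_{k,l}|^2\Big)\Big(\sum_l (l+1)^{-\alpha}|w|^{2l}\Big).
\]
Since $|w|<1$, the second factor is a finite constant $M_w$ depending only on $\alpha$ and $w$; summing the resulting bound against $(k+1)^{\alpha}$ yields $\|R_w f\|_{D_{\alpha}}^2\le M_w\|f\|_{\alpha}^2$. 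In the special case $w=0$ this is even cleaner, since $\|f(\cdot,0)\|_{D_{\alpha}}^2=\sum_k (k+1)^{\alpha}|a_{k,0}|^2\le\|f\|_{\alpha}^2$, so $R_0$ is a contraction.

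With boundedness established, the conclusion follows quickly. Because $f$ is cyclic there is a sequence $(p_n)$ of two-variable polynomials with $\|p_n f-1\|_{\alpha}\to 0$. Applying $R_w$ and the bound above gives $\|R_w(p_n f)-1\|_{D_{\alpha}}\le\sqrt{M_w}\,\|p_n f-1\|_{\alpha}\to 0$. But $R_w(p_n f)(z_1)=p_n(z_1,w)f(z_1,w)=q_n(z_1)f_{z_2}(z_1)$, where $q_n(z_1):=p_n(z_1,w)$ is a one-variable polynomial (of degree at most $n$). Hence $\|q_n f_{z_2}-1\|_{D_{\alpha}}\to 0$, which is precisely cyclicity of $f_{z_2}$ in $D_{\alpha}$. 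The argument for $f_{z_1}$ is identical after interchanging the roles of the two variables.

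The only real content lies in the boundedness of $R_w$, and the main obstacle there is ensuring that the constant $M_w$ is finite; this is exactly where the hypothesis $|w|<1$ is essential, as the series $\sum_l (l+1)^{-\alpha}|w|^{2l}$ need not converge on the boundary $|w|=1$. I expect no difficulty in the transfer-of-cyclicity step itself, since it is a soft consequence of the continuity of a bounded linear map.
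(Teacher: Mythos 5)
Your proof is correct and takes essentially the same route as the paper: the paper also applies Cauchy--Schwarz to the coefficients of the slice to get $\|f_{z_2}\|_{D_\alpha}\le \|k_{z_2}\|_{D_\alpha}\,\|f\|_{\alpha}$, where $\|k_{z_2}\|_{D_\alpha}^2=\sum_l (l+1)^{-\alpha}|z_2|^{2l}$ is exactly your constant $M_w$ (the squared norm of the reproducing kernel of $D_\alpha$ at the fixed point), and then transfers cyclicity by applying this bound to $p_n f-1$. The only difference is cosmetic: you compute the Cauchy--Schwarz constant explicitly rather than naming it as a reproducing kernel norm.
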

\begin{proof}
 As a consequence of the Cauchy-Schwarz inequality applied to the coefficients of $f_{z_2}$ we obtain
 \[\|f_{z_2}\|_{D_{\alpha}}\leq\|k_{z_2}\|_{D_{\alpha}}\cdot \|f\|_{\alpha},\]
 where $k_{z_2}$ denotes the reproducing kernel at $z_2$ for $D_\alpha$. Therefore, for any
polynomial $p=p(z_1,z_2)$ we get
 \[\|p_{z_2}f_{z_2}-1\|_{D_{\alpha}}\leq\|k_{z_2}\|_{D_{\alpha}}\cdot \|pf-1\|_{\alpha}.\]
If $f$ is cyclic in $\mathfrak{D}_{\alpha},$ then this last norm
tends to $0$ as the degree of $p$ approaches $\infty$, and therefore for fixed
$z_2$, $\|p_{z_2}f_{z_2}-1\|_{D_{\alpha}}$ approaches $0$ as well.
Consequently, the slice $f_{z_2}$ is cyclic in $D_{\alpha}.$ An
analogous argument applies to the slices in $z_1$, and thus the
result is shown.
\end{proof}
Note that the converse of the above statement does not hold:
consider, for example, $f(z_1,z_2) = 1 - z_1 z_2.$ Then each slice
$f_{z_2}$ and $f_{z_1}$ is non-vanishing in the closed unit disk
(for a fixed $z_2$ and a fixed $z_1$, respectively), and thus each
is cyclic in every $D_{\alpha}$, but it turns out that $f$ is only
cyclic in $\mathfrak{D}_{\alpha}$ for $\alpha \leq 1/2$ (see Remark
\ref{counterex}).

Let us now consider three different natural ways to construct a one variable function from a two variable function and examine issues of cyclicity.
\subsection{Diagonal Restrictions}
The \textit{restriction to the diagonal} of a holomorphic function
on the bidisk produces a function on the disk, and it turns out
that these functions often inherit properties that allow us to
transfer information between one and two variable spaces, see
e.g.~\cite{HoroOber, RudBook}.  For instance, in a recent paper,
Massaneda and Thomas, see \cite{MT13}, were able to use restriction
arguments to show that it is not possible to characterize cyclic
functions in $H^2(\mathbb{D}^2)$ in terms of decay at the boundary.

We define the restriction operator $R_\mathrm{diag}$ on $f\in \mathfrak{D}_{\alpha}$
by
\[R_{\mathrm{diag}}: f\mapsto (\drestr f)(z) 
 = f(z,z), \quad z \in \mathbb{D}.\]
To rigorously define which spaces this restriction operator acts on, we define the map
\begin{align*}
\beta(\alpha)= \left\{\begin{array}{ll}\alpha-1&\text{for }\alpha\ge 0,\\ 2\alpha-1&\text{for }\alpha< 0.\end{array}\right.
\end{align*}
In order to shorten notation, we use the abbreviation $\beta = \beta(\alpha)$. In the context of the
Dirichlet-type spaces, the following restriction estimate holds.
\begin{prop}
\label{minusone}
If $\alpha\le 2$, then we have
\[
\|\drestr f\|_{D_{\beta}} \le \|f\|_{\alpha} \quad \textrm{for all}\quad  f\in\mathfrak{D}_\alpha.\]
\end{prop}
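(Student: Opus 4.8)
The plan is to reduce the statement to a single elementary inequality on the anti-diagonals of the coefficient array. First I would expand $f(z_1,z_2)=\sum_{k,l}a_{k,l}z_1^kz_2^l$ and read off the Taylor coefficients of the diagonal restriction: setting $z_1=z_2=z$ gives $(\drestr f)(z)=\sum_{n=0}^{\infty}c_nz^n$ with $c_n=\sum_{k+l=n}a_{k,l}$. Both norms then organize by the level $n=k+l$: one has $\|\drestr f\|_{D_{\beta}}^2=\sum_n (n+1)^{\beta}|c_n|^2$, while $\|f\|_{\alpha}^2=\sum_n\big(\sum_{k+l=n}(k+1)^{\alpha}(l+1)^{\alpha}|a_{k,l}|^2\big)$. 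Applying Cauchy--Schwarz on each fixed level $n$, with the weights $(k+1)^{\alpha}(l+1)^{\alpha}$, yields
\[|c_n|^2\le\Big(\sum_{k+l=n}\tfrac{1}{(k+1)^{\alpha}(l+1)^{\alpha}}\Big)\Big(\sum_{k+l=n}(k+1)^{\alpha}(l+1)^{\alpha}|a_{k,l}|^2\Big).\]
Thus it suffices to prove that the prefactor $M_n:=(n+1)^{\beta}\sum_{k+l=n}[(k+1)(l+1)]^{-\alpha}$ satisfies $M_n\le 1$ for every $n$; summing the resulting inequalities over $n$ then gives $\|\drestr f\|_{D_{\beta}}^2\le\|f\|_{\alpha}^2$.

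The heart of the matter is therefore the scalar inequality $M_n\le 1$, and here the piecewise definition of $\beta$ enters. Parametrizing the level by $j=k+1\in\{1,\dots,n+1\}$, so that $l+1=n+2-j$, the products appearing are $(k+1)(l+1)=j(n+2-j)$. Over this range the product is minimized at the endpoints, where it equals $n+1$, and maximized near the centre, where it is at most $(\tfrac{n}{2}+1)^2$. When $\alpha\ge 0$ the map $x\mapsto x^{-\alpha}$ is non-increasing, so each summand is bounded by $(n+1)^{-\alpha}$; summing the $n+1$ terms and inserting $\beta=\alpha-1$ collapses $M_n$ to at most $(n+1)^{\alpha-1}(n+1)^{1-\alpha}=1$. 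When $\alpha<0$ one instead bounds each summand by $[(\tfrac{n}{2}+1)^2]^{-\alpha}$, and with $\beta=2\alpha-1$ the quantity $M_n$ reduces to $\big(\tfrac{n+2}{2(n+1)}\big)^{-2\alpha}$, which is $\le 1$ since $n+2\le 2(n+1)$ and $-2\alpha>0$. In both regimes $M_n\le 1$, with equality at $n=0$; the specific form of $\beta(\alpha)$ is precisely what makes the surviving powers of $(n+1)$ cancel in each case.

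The only genuine obstacle is the verification of $M_n\le 1$, and the work is entirely in selecting the correct pointwise bound on $(k+1)(l+1)$ for each sign of $\alpha$ and checking that $\beta$ makes the powers of $(n+1)$ cancel. The Cauchy--Schwarz step and the final summation are routine, since all series have nonnegative terms and no convergence issue arises: if the right-hand side is infinite there is nothing to prove, and otherwise the level-by-level bounds add up directly. I expect the case split at $\alpha=0$, mirroring the definition of $\beta$, to be the subtle bookkeeping point, together with confirming that the constant is exactly $1$ rather than merely finite. Note finally that the hypothesis $\alpha\le 2$ keeps $\beta=\beta(\alpha)\le 1$, which is the range of one-variable Dirichlet-type exponents relevant to the comparison arguments that follow; within the scheme above it enters only through this normalization.
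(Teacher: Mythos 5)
Your proof is correct and follows essentially the same route as the paper: expand both norms along the anti-diagonals $k+l=n$, apply Cauchy--Schwarz with the weights $(k+1)^{\alpha}(l+1)^{\alpha}$ on each level, and reduce everything to the scalar bound $\sum_{k+l=n}[(k+1)(l+1)]^{-\alpha}\le (n+1)^{-\beta}$. In fact you supply slightly more than the paper does, since the paper asserts this weight-sum estimate without proof, whereas your endpoint/midpoint analysis of $j(n+2-j)$ in the two regimes $\alpha\ge 0$ and $\alpha<0$ verifies it explicitly and shows exactly how the piecewise definition of $\beta(\alpha)$ makes the powers of $(n+1)$ cancel.
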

This result is probably known to the experts, and can be proved by appealing to the theory of reproducing kernels. For the convenience of the reader, we give an
elementary proof.
\begin{proof}[Proof of Proposition \ref{minusone}]
Let $f(z_1,z_2)=\sum_{k=0}^\infty \sum_{l=0}^\infty a_{k,l} z_1^k z_2^l$, which converges absolutely for every $|z_1|< 1$ and $|z_2|< 1.$ Then
$$\drestr f(z) =\sum_{k=0}^\infty \sum_{l=0}^\infty a_{k,l} z^{k+l}$$ converges absolutely for every $|z| < 1$ and can therefore be rewritten as
 $\drestr f (z)= \sum_{n=0}^\infty b_n z^{n},$ where $b_n = \sum_{k+l=n} a_{k,l} = \sum_{k=0}^n a_{k,n-k}.$
Hence,
\[
    \|\drestr f\|_{D_\beta}^2
    = \sum_{n=0}^\infty | b_n|^2(n+1)^\beta
    = \sum_{n=0}^\infty \left|\sum_{k=0}^n a_{k,n-k}\right|^2(n+1)^\beta
\]
and
\begin{align*}
    \|f\|_\alpha^2
    &=
    \sum_{n=0}^\infty \sum_{k=0}^n |a_{k,n-k}|^2 (k+1)^\alpha (n-k+1)^\alpha.
\end{align*}

By the Cauchy--Schwarz inequality, we have
\begin{align*}
&
\left|\sum_{k=0}^n a_{k,n-k}\right|^2 \\
&
\le
\left(\sum_{k=0}^n \left|a_{k,n-k}\right|^2(k+1)^{\alpha}(n-k+1)^{\alpha}\right)\left(\sum_{k=0}^n(k+1)^{-\alpha}(n-k+1)^{-\alpha}\right)\\
&\le
\left(\sum_{k=0}^n \left|a_{k,n-k}\right|^2(k+1)^{\alpha}(n-k+1)^{\alpha}\right)
 (n+1)^{-\beta}.
\end{align*}
In summary, our observations yield
\begin{align*}
&
\|\drestr f\|_{D_\beta}^2
=
 \sum_{n=0}^\infty \left|\sum_{k=0}^n a_{k,n-k}\right|^2(n+1)^\beta\\
&
\le
\sum_{n=0}^\infty \sum_{k=0}^n |a_{k,n-k}|^2 (k+1)^\alpha (n-k+1)^\alpha
=
\|f\|_\alpha^2
\end{align*}
and the proposition is proved.
\end{proof}

This result implies that a function $g \in D_{\beta}$ that arises as
the restriction to the diagonal of a cyclic function in
$\mathfrak{D}_{\alpha}$ is itself cyclic. Viewed differently, a
function of two variables cannot be cyclic in
$\mathfrak{D}_{\alpha}$ unless its restriction $\drestr f$ is cyclic
in $D_{\beta}$ (though it can happen that $\drestr f$ is cyclic, and
$f\in \mathfrak{D}_{\alpha}$ is not); see \cite{MT13} for a
discussion in the context of $H^2(\mathbb{D}^2)$. Moreover, together
with Theorem \ref{onevariableoptimal}, Proposition \ref{minusone}
immediately implies a lower bound for the decay rate of $\|p_n f -
1\|^2_\alpha$ for certain ``nice" functions $f$:
\begin{cor}
Let $\alpha\le 2$.
Suppose $f\in \mathfrak{D}_\alpha$ is such that the diagonal restriction 
$\drestr f$
satisfies the hypotheses of Theorem \ref{onevariableoptimal}. Then
\[\|p_n f - 1\|^2_\alpha \ge C \varphi_{\beta}^{-1}(n+1), \quad \textrm{for all}\quad p_n\in \mathfrak{P}_n.\]
\end{cor}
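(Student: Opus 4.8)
The plan is to transport the desired lower bound to the diagonal, where the one-variable Theorem \ref{onevariableoptimal} applies directly. The key structural fact is that the restriction operator $\drestr$ behaves multiplicatively against polynomials: for any $p_n\in\mathfrak{P}_n$ one has $\drestr(p_nf-1)(z)=(\drestr p_n)(z)\,(\drestr f)(z)-1$. Writing $p_n=\sum_{k,l=0}^{n}c_{k,l}z_1^kz_2^l$ gives $\drestr p_n(z)=\sum_{k,l=0}^{n}c_{k,l}z^{k+l}$, so $\drestr p_n$ is a one-variable polynomial of degree at most $2n$; that is, $\drestr p_n\in\Pol_{2n}$.

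First I would apply Proposition \ref{minusone} to $p_nf-1\in\mathfrak{D}_\alpha$, which is permitted since $\alpha\le2$, obtaining
\[
\|p_nf-1\|_\alpha^2\ge\|\drestr(p_nf-1)\|_{D_\beta}^2=\|(\drestr p_n)(\drestr f)-1\|_{D_\beta}^2.
\]
As $p_n$ ranges over $\mathfrak{P}_n$, the polynomial $\drestr p_n$ ranges within $\Pol_{2n}$, so the right-hand side is at least the one-variable distance $\dist_{D_\beta}^2(1,(\drestr f)\cdot\Pol_{2n})$.

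Next I would verify that Theorem \ref{onevariableoptimal} is applicable with $\beta$ playing the role of $\alpha$. The hypothesis $\alpha\le2$ forces $\beta\le1$, since $\beta=\alpha-1\le1$ when $\alpha\ge0$ and $\beta=2\alpha-1<1$ when $\alpha<0$. By assumption $\drestr f$ satisfies the hypotheses of Theorem \ref{onevariableoptimal}, including the zero on $\TT$ needed for the lower estimate; the lower-bound part of that theorem then furnishes a constant $\tilde C=\tilde C(\beta,\drestr f)$ with
\[
\dist_{D_\beta}^2(1,(\drestr f)\cdot\Pol_{2n})\ge\tilde C\,\varphi_\beta^{-1}(2n+1).
\]

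Finally, because $\varphi_\beta^{-1}$ is increasing and $2n+1\ge n+1$, one has $\varphi_\beta^{-1}(2n+1)\ge\varphi_\beta^{-1}(n+1)$, and chaining the inequalities yields the claim with $C=\tilde C$. I do not anticipate a genuine obstacle here: the only subtlety is the degree bookkeeping, namely that the diagonal of a member of $\mathfrak{P}_n$ has degree $2n$ rather than $n$. This is exactly what forces $\Pol_{2n}$ (and hence $\varphi_\beta^{-1}(2n+1)$) to appear, but monotonicity of $\varphi_\beta^{-1}$ lets us pass back to $n+1$ with no loss.
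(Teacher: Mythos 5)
Your overall strategy is exactly the argument the paper has in mind (the corollary is stated there as an ``immediate'' consequence of Proposition \ref{minusone} and Theorem \ref{onevariableoptimal}, with no written proof): restrict $p_nf-1$ to the diagonal, use $\drestr(p_nf-1)=(\drestr p_n)(\drestr f)-1$ with $\drestr p_n\in\Pol_{2n}$, check that $\alpha\le 2$ forces $\beta\le 1$, and invoke the one-variable lower bound. All of that is correct, including your reading that the hypotheses must include a zero of $\drestr f$ on $\TT$ for the sharpness direction to apply.

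However, your final step is wrong as stated. In this paper $\varphi_{\beta}^{-1}$ denotes the \emph{reciprocal} $1/\varphi_{\beta}$, not the functional inverse: $\varphi_{\beta}^{-1}(s)=s^{\beta-1}$ for $\beta<1$ and $1/\log^{+}s$ for $\beta=1$. This is forced by its role in Theorem \ref{onevariableoptimal}, where $\dist^2_{D_\alpha}(1,f\cdot\Pol_m)$ is bounded above by $C\varphi_\alpha^{-1}(m+1)$ and must decay in $m$ (an increasing upper bound would be vacuous, and an increasing lower bound impossible, since the distance is at most $1$). Hence $\varphi_{\beta}^{-1}$ is \emph{decreasing}, and your claimed inequality $\varphi_{\beta}^{-1}(2n+1)\ge\varphi_{\beta}^{-1}(n+1)$ is false (take $\beta=0$: $1/(2n+1)<1/(n+1)$), so the chain breaks at the last link. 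The repair is standard and costs only a constant: for $\beta<1$,
\begin{equation*}
\varphi_{\beta}(2n+1)\le\varphi_{\beta}(2n+2)=2^{1-\beta}\varphi_{\beta}(n+1),
\qquad\text{so}\qquad
\varphi_{\beta}^{-1}(2n+1)\ge 2^{\beta-1}\varphi_{\beta}^{-1}(n+1),
\end{equation*}
and for $\beta=1$ (i.e.\ $\alpha=2$) one has $\log(2n+1)\le 2\log(n+1)$ for $n\ge 1$, giving $\varphi_{1}^{-1}(2n+1)\ge\tfrac12\varphi_{1}^{-1}(n+1)$. You then conclude with $C=c(\beta)\tilde C$ rather than $C=\tilde C$. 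This comparability device is precisely what the paper uses elsewhere, in the proof of Theorem \ref{separable_growth}, where it notes that $\varphi_{\alpha}(2n+1)$ is comparable to $\varphi_{\alpha}(n+1)$.
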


We will see later (see Theorems \ref{characterize}  and \ref{sharpcyclicity}) that this decay rate is not
optimal in general.
Note that the diagonal restrictions of the functions $f(z_1,z_2)=1-z_1 z_2$, $f(z_1,z_2)=(1-z_1)(1-z_2)$,
and $f(z_1,z_2)=1-z_1$  
all satisfy the hypotheses.

The above remarks show how, given a cyclic function of two
variables, one can easily obtain examples of cyclic functions of one
variable (although we might need to change the index $\alpha$ of the
space in which cyclicity is being considered!)  
In the next two subsections we examine
how to obtain some classes of cyclic functions
of two variables from cyclic functions of one variable, and we
obtain {\it sharp} rates of decay in some cases.

\subsection{Separable functions}
Let us now consider functions of two variables that can be written as products of two functions of one variable:
\begin{align}
\label{product}
f(z_1,z_2)=g(z_1) h(z_2).
\end{align}
We shall refer to such functions as {\it separable}.
Note that for such products, it follows from \eqref{JRnorm} that $\|f\|_\alpha = \|g\|_{D_\alpha} \|h\|_{D_\alpha}.$

\begin{prop}\label{characterize}
Let $\alpha \in \RR$ and $f$ be defined as in \eqref{product}, where $g,h \in D_{\alpha}$.
Then $f$ is cyclic in $\mathfrak{D}_{\alpha}$ if and only if $g$ and
$h$ are cyclic in $D_{\alpha}$.
\end{prop}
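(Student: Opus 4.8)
The plan is to prove the two implications separately, with the reverse direction carrying the real content.

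For the forward direction, suppose $f=gh$ is cyclic in $\mathfrak{D}_{\alpha}$. By Proposition \ref{slice}, each slice $f_{z_2}$ is cyclic in $D_{\alpha}$. A cyclic function cannot be identically zero, so $h\not\equiv 0$, and I may fix a point $w\in\mathbb{D}$ with $h(w)\neq 0$. Then the slice $f_{w}(z_1)=g(z_1)h(w)$ is a nonzero scalar multiple of $g$; since $[cg]=[g]$ for any constant $c\neq 0$, cyclicity of $f_w$ forces $g$ to be cyclic in $D_{\alpha}$. Applying the same reasoning to slices in $z_1$ shows that $h$ is cyclic.

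For the reverse direction, assume $g$ and $h$ are cyclic in $D_{\alpha}$, and choose one-variable polynomials $(q_n)$ and $(r_n)$ with $\|q_ng-1\|_{D_{\alpha}}\to 0$ and $\|r_nh-1\|_{D_{\alpha}}\to 0$. I would take the tensor-product approximants $p_n(z_1,z_2)=q_n(z_1)r_n(z_2)$, which are polynomials in two variables, and write $u_n=q_ng$ and $v_n=r_nh$. The key is the algebraic splitting
\[
p_nf-1=u_n(z_1)v_n(z_2)-1=u_n(z_1)\bigl(v_n(z_2)-1\bigr)+\bigl(u_n(z_1)-1\bigr),
\]
which separates the error into a genuinely separable piece and a piece depending on $z_1$ alone.

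To estimate the two terms, I would invoke two elementary consequences of \eqref{JRnorm}: the multiplicativity $\|GH\|_{\alpha}=\|G\|_{D_{\alpha}}\|H\|_{D_{\alpha}}$ for separable $G(z_1)H(z_2)$ recorded just before the proposition, and the fact that a function of $z_1$ only has the same $\mathfrak{D}_{\alpha}$-norm as its $D_{\alpha}$-norm (the omitted index contributes a factor $(0+1)^{\alpha}=1$). These yield
\[
\|p_nf-1\|_{\alpha}\le\|u_n\|_{D_{\alpha}}\,\|v_n-1\|_{D_{\alpha}}+\|u_n-1\|_{D_{\alpha}}.
\]
Since $\|u_n\|_{D_{\alpha}}\le 1+\|u_n-1\|_{D_{\alpha}}$ remains bounded while both $\|u_n-1\|_{D_{\alpha}}$ and $\|v_n-1\|_{D_{\alpha}}$ tend to $0$, the right-hand side vanishes, so $1\in[f]$ and $f$ is cyclic. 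The only genuine obstacle is the selection of an effective family of two-variable approximants; the tensor-product choice together with the splitting identity is precisely what keeps the cross term controllable, after which the estimates are routine consequences of the product structure of the norm.
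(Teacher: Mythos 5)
Your proof is correct, and while your forward direction coincides with the paper's (both go through Proposition \ref{slice}; you are in fact slightly more careful in noting that one must choose a slice point $w$ with $h(w)\neq 0$ before identifying the slice with a scalar multiple of $g$), your converse takes a genuinely different route. The paper never forms tensor-product approximants in this proof: it first shows $h\in[f]$ using polynomials in $z_1$ alone, via the separable identity $\|p_n gh-h\|_{\alpha}=\|p_n g-1\|_{D_{\alpha}}\|h\|_{D_{\alpha}}$, and then concludes abstractly, combining the transitivity fact that $h\in[f]$ implies $[h]\subset[f]$ with the observation that $\|q_n h-1\|_{\alpha}=\|q_n h-1\|_{D_{\alpha}}$, so that $h$ is cyclic in $\mathfrak{D}_{\alpha}$ and $D_{\alpha}$ simultaneously. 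You instead exhibit explicit two-variable approximants $q_n(z_1)r_n(z_2)$ and control $\|q_n r_n f-1\|_{\alpha}$ directly through the splitting $u_n v_n-1=u_n(v_n-1)+(u_n-1)$; the two norm facts you invoke (multiplicativity of the norm \eqref{JRnorm} on separable functions, and equality of the one- and two-variable norms for functions of $z_1$ alone) are exactly the ones the paper uses, and your estimates, including the boundedness of $\|u_n\|_{D_{\alpha}}$, are sound. What your route buys is self-containedness --- no appeal to the invariant-subspace fact $g\in[f]\Rightarrow[g]\subset[f]$ --- and an explicit, quantitative family of approximants; indeed your decomposition is essentially the one the paper deploys later, in the proof of Theorem \ref{separable_growth}, to obtain sharp decay rates. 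What the paper's route buys is brevity: by passing through $[h]\subset[f]$ it avoids tracking the cross term altogether.
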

\begin{proof}
First notice that by Proposition \ref{slice}, if $f$ is cyclic in
$\mathfrak{D}_{\alpha}$, then $g$ and $h$ are constant multiples
(with respect to the fixed variable) of the slices of $f$, and thus
are cyclic in $D_{\alpha}$.

For the converse,
suppose both $g$ and $h$ are cyclic in $D_{\alpha}$. Let $(p_n)$ and $(q_n)$ be sequences of polynomials such that
$\|p_ng-1\|_{D_{\alpha}}\to 0$ and $\|q_nh-1\|_{D_{\alpha}}\to 0$, respectively.
Since the expression $p_n g h - h = (p_n(z_1) g(z_1) -1 ) h(z_2)$
is separable, we obtain
\begin{align*}
\|p_n f- h\|_{\alpha} = \|p_n g - 1\|_{D_\alpha}\|h\|_{D_\alpha}.
\end{align*}
Hence, we get that $h\in [f]$, where $[\cdot]$ denotes the cyclicity class in $\mathfrak{D}_\alpha$, 
and so $[h]\subset [f]$. Since $\|q_n h- 1\|_{\alpha} = \|q_n h - 1\|_{D_\alpha}$,
the function $h$ is cyclic in $\mathfrak{D}_\alpha$ and $D_\alpha$ simultaneously, and the assertion follows.
\end{proof}

It seems natural to ask whether the growth of the extremal polynomials 
for separable functions is the same as for functions in the unit disk.
As we will see in Theorem \ref{separable_growth}, this is
indeed the case.  Let us first prove a lemma that will help to
establish the sharp growth restrictions.

\begin{lem}\label{optimality}
    Suppose $f=g\cdot h \in \mathfrak{D}_{\alpha}$ for $g,h\in \mathfrak{D}_{\alpha}$, and suppose that $g$ admits a non-vanishing analytic continuation to the closed bidisk.  Then there exists a constant $C$, independent of $n$, such that
        $$ \dist_{\mathfrak{D}_\alpha}(1, f \cdot \mathfrak{P}_n) \geq C \dist_{\mathfrak{D}_\alpha}(1, h \cdot \mathfrak{P}_{2n}).$$
\end{lem}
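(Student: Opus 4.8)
The plan is to exploit that a function analytic and non-vanishing on (a neighborhood of) the closed bidisk, together with its reciprocal, is a multiplier of $\mathfrak{D}_\alpha$, so that multiplication by $g$ and by $1/g$ are bounded on $\mathfrak{D}_\alpha$ with finite norms $\|g\|_{M(\mathfrak{D}_\alpha)}$ and $\|1/g\|_{M(\mathfrak{D}_\alpha)}$. Writing $A_n := \dist_{\mathfrak{D}_\alpha}(1, f\cdot\mathfrak{P}_n)$ and $B_n := \dist_{\mathfrak{D}_\alpha}(1, h\cdot\mathfrak{P}_{2n})$, the idea is that any good approximant $p \in \mathfrak{P}_n$ to $1/f$ yields, after multiplication by $g$, a good approximant to $1/h$; since $pg$ is no longer a polynomial, I would replace $g$ by its Taylor polynomial of bidegree $(n,n)$, producing a genuine element of $\mathfrak{P}_{2n}$ at the cost of a geometrically small error. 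The degree doubling is exactly what is needed for this truncated product to land in $\mathfrak{P}_{2n}$.

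First I would fix $p \in \mathfrak{P}_n$ with $\|pf - 1\|_\alpha \le 1$ (which holds for the optimal approximant, since $p\equiv 0$ already gives $\|-1\|_\alpha = 1$); then $\|pf\|_\alpha \le 2$, and hence the a priori bound $\|ph\|_\alpha = \|(1/g)\,pf\|_\alpha \le 2\|1/g\|_{M(\mathfrak{D}_\alpha)}$ holds uniformly in $n$. Let $g^{(n)}$ denote the Taylor polynomial of $g$ retaining the terms $z_1^k z_2^l$ with $k,l \le n$, and set $q := p\, g^{(n)} \in \mathfrak{P}_{2n}$. Using $f = gh$ one computes the identity
\[ qh - 1 = (pf - 1) - (g - g^{(n)})\,(ph), \]
so that
\[ \|qh - 1\|_\alpha \le \|pf - 1\|_\alpha + \|g - g^{(n)}\|_{M(\mathfrak{D}_\alpha)}\,\|ph\|_\alpha \le \|pf - 1\|_\alpha + 2\|1/g\|_{M(\mathfrak{D}_\alpha)}\,\varepsilon_n, \]
where $\varepsilon_n := \|g - g^{(n)}\|_{M(\mathfrak{D}_\alpha)}$. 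Since $g$ extends analytically past $\overline{\mathbb{D}^2}$, its Taylor coefficients decay geometrically, whence $\varepsilon_n \le C\rho^n$ for some $\rho \in (0,1)$. Taking the infimum over admissible $p$ yields the additive comparison $B_n \le A_n + C' \rho^n$.

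The main obstacle is converting this additive estimate into the multiplicative bound $A_n \ge C B_n$ asserted by the lemma: one must ensure that the geometrically small error $C'\rho^n$ is dominated by $A_n$ itself. The hard part is therefore to certify that $A_n$ does not decay faster than geometrically. I would obtain such a lower bound from the diagonal-restriction estimate of Proposition \ref{minusone}: since $\drestr(pf - 1) = (\drestr p)(\drestr f) - 1$ with $\drestr p \in \mathcal{P}_{2n}$, that proposition gives $A_n \ge \dist_{D_\beta}(1, (\drestr f)\cdot\mathcal{P}_{2n})$, and whenever $\drestr f$ has a boundary zero this one-variable distance decays only polynomially by the sharp lower bound in Theorem \ref{onevariableoptimal}, so that $A_n \gg \rho^n$. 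Consequently, for all sufficiently large $n$ the error is at most $\tfrac12 A_n$, giving $B_n \le 2A_n$ and hence $A_n \ge \tfrac12 B_n$; the finitely many remaining values of $n$ are absorbed by shrinking the constant.

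I expect the delicate points to be exactly the geometric decay of $\varepsilon_n$ measured in the multiplier norm (rather than in $\|\cdot\|_\alpha$), and the verification that the relevant $A_n$ is bounded below by a polynomially decaying quantity; the algebraic identity for $qh-1$ and the uniform a priori bound on $\|ph\|_\alpha$ are routine. In the degenerate situation where $1/f$ is itself a polynomial (so $A_n$ eventually vanishes), the estimate is vacuous, and the argument should be read as applying to the generic regime in which $\drestr f$, and hence $A_n$, exhibits only polynomial decay.
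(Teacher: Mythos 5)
Your core argument is the paper's own: replace $g$ by a Taylor polynomial $g_n$ so that $pg_n\in\mathfrak{P}_{2n}$, use that $\|1/g\|_{M(\mathfrak{D}_{\alpha})}<\infty$ and that $\|g-g_n\|_{M(\mathfrak{D}_{\alpha})}$ decays geometrically, derive a uniform bound on $\|ph\|_{\alpha}$, and rearrange algebraically. Your inequality
\[
\dist_{\mathfrak{D}_{\alpha}}(1,h\cdot\mathfrak{P}_{2n})\le\|pf-1\|_{\alpha}+\|g-g^{(n)}\|_{M(\mathfrak{D}_{\alpha})}\,\|ph\|_{\alpha}
\]
is precisely the paper's estimate
\[
\|p_nf-1\|_{\alpha}\ge\|g_np_nh-1\|_{\alpha}-\|p_nh\|_{\alpha}\,\|g-g_n\|_{M(\mathfrak{D}_{\alpha})}
\]
read in reverse. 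The genuine difference is what happens next. The paper passes from this additive estimate to the multiplicative conclusion in a single unjustified sentence; you correctly single out that passage as the crux, and you patch it by producing a lower bound for $A_n=\dist_{\mathfrak{D}_{\alpha}}(1,f\cdot\mathfrak{P}_n)$ of only polynomial decay, via diagonal restriction (Proposition \ref{minusone} combined with the lower bound in Theorem \ref{onevariableoptimal}), so that the geometric error term is eventually negligible.

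Two comments on that patch. First, it invokes hypotheses absent from the lemma: $\alpha\le 2$, and that $\drestr f$ extends analytically to $\overline{\DD}$ with zeros in $\CC\setminus\DD$ and at least one zero on $\TT$; under the lemma's stated hypotheses $h$ is an arbitrary element of $\mathfrak{D}_{\alpha}$, so none of this is available. Thus, as a proof of the literal statement, your argument is incomplete --- but so is the paper's, and necessarily so, because the literal statement is false: take $h=1-z_1/2$ and $g=1/h$, so that $f=gh=1$; all hypotheses hold, yet $\dist_{\mathfrak{D}_{\alpha}}(1,f\cdot\mathfrak{P}_n)=0$ while $\dist_{\mathfrak{D}_{\alpha}}(1,h\cdot\mathfrak{P}_{2n})>0$, since $h\cdot\mathfrak{P}_{2n}$ is finite-dimensional (hence closed) and $1/h$ is not a polynomial. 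This also corrects your closing remark: in the degenerate case the estimate is not vacuous but false, so an assumption forcing slow decay is genuinely needed, and you were right to insist on one where the paper stays silent. Second, in the lemma's only application (the sharpness part of Theorem \ref{separable_growth}) there is a cheaper completion than yours: there $h=h(z_2)$ has a zero on $\TT$, so by orthogonality of monomials and Theorem \ref{onevariableoptimal}, $B_n=\dist_{\mathfrak{D}_{\alpha}}(1,h\cdot\mathfrak{P}_{2n})$ satisfies $B_n^2\ge c\,\varphi_{\alpha}^{-1}(n+1)$; hence the geometric error is eventually at most $\tfrac12 B_n$ and $A_n\ge B_n-K\rho^n\ge\tfrac12 B_n$ follows at once, with no appeal to the diagonal restriction. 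Both patches are valid in that setting; yours simply takes the longer route through Proposition \ref{minusone}.
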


\begin{proof}
Notice first that since the power series for $g$ converges in a larger polydisk than the unit bidisk, there exists $R > 1$ such that
if $g_n$ are the Taylor polynomials of degree $n$ approximating $g$, the multiplier norm $\|g - g_n\|_{M(\mathfrak{D}_{\alpha})}$ decays exponentially like $R^{-(n+1)}.$
Moreover, since in addition $g$ has no zeros in the closed disk, the multiplier norm $\|1/g\|_{M(\mathfrak{D}_{\alpha})}$ is bounded.

Now let $p_n(z_1,z_2)$ be the optimal approximant to $1/f$ of degree $n$. Then by the above remarks, we have
$$ \|p_nh - 1/g\|_{\alpha} \leq  \|1/g\|_{M(\mathfrak{D}_{\alpha})} \|p_n f - 1\|_{\alpha},$$
which goes to $0$ as $n \rightarrow \infty,$ and therefore in particular, the norms $\|p_nh\|_{\alpha}$ are bounded by some constant $C_1$.
Moreover,
\begin{align*}
\|p_nf - 1 \|_{\alpha} &  =  \|p_n h (g - g_n) + g_n p_n h - 1 \|_{\alpha} \\
& \geq  \|g_n p_n h - 1 \|_{\alpha} - \|p_nh\|_{\alpha} \|g - g_n\|_{M(\mathfrak{D}_{\alpha})}.
\end{align*}
Since $\|p_nh\|_{\alpha}$ is bounded and $\|g - g_n\|_{M(\mathfrak{D}_{\alpha})}$ decays exponentially, we obtain that there exists a constant $C$ such that
$$ \|p_nf - 1 \|_{\alpha}^2 \geq C \dist_{\mathfrak{D}_{\alpha}}(1, h \cdot \mathfrak{P}_{2n}),$$
as desired.
\end{proof}

Using Lemma \ref{optimality}, we obtain sharp estimates on the decay of norms.
\begin{thm}\label{separable_growth}
    Let $\alpha \leq 1$ and $g,h \in D_{\alpha}$. Suppose $g$ and $h$ admit analytic continuations to $\overline{\DD}$ and have no zeros in $\DD.$ Define $f(z_1,z_2) = g(z_1)h(z_2).$ Then
    there exists a constant $C = C(g,h,\alpha)$ such that
        $$  \dist^2_{\mathfrak{D}_\alpha}(1, f \cdot \mathfrak{P}_n) \leq C \varphi^{-1}_{\alpha} (n+1),$$ for all sufficiently large $n$. Moreover, this estimate is sharp in the sense that
    if $h$ has at least one zero on $\TT$ and $g$ has no zeros in the closed disk $\DD$ (or vice versa), then there exists a constant $\tilde{C} = \tilde{C}(g, h, \alpha)$ such that
        $$ \tilde{C} \varphi^{-1}_{\alpha} (n+1) \leq \dist^2_{\mathfrak{D}_\alpha}(1, f \cdot \mathfrak{P}_n).$$
\end{thm}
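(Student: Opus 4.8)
The plan is to reduce the two-variable estimate to the one-variable result (Theorem~\ref{onevariableoptimal}) via the separable structure of $f$. First I would take optimal approximants in one variable: let $(a_n)\subset\Pol_n$ and $(b_n)\subset\Pol_n$ be the optimal approximants to $1/g$ and $1/h$ respectively, so that $\|a_n g-1\|_{D_\alpha}^2\le C\varphi_\alpha^{-1}(n+1)$ and similarly for $b_n h$, by Theorem~\ref{onevariableoptimal}. The natural candidate approximant to $1/f$ in two variables is the product $p_n(z_1,z_2)=a_n(z_1)b_n(z_2)$, which lies in $\mathfrak{P}_n$. Writing $p_n f-1=a_n g\,b_n h-1$ and telescoping as $(a_n g-1)(b_n h-1)+(a_n g-1)+(b_n h-1)$, the separable norm identity $\|uv\|_\alpha=\|u\|_{D_\alpha}\|v\|_{D_\alpha}$ turns each term into a product of one-variable norms. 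Since $\|a_n g-1\|_{D_\alpha}$ and $\|b_n h-1\|_{D_\alpha}$ are both $O(\varphi_\alpha^{-1/2}(n+1))$ and hence bounded, the cross term is lower order and the two linear terms dominate, yielding
\[
\dist_{\mathfrak{D}_\alpha}^2(1,f\cdot\mathfrak{P}_n)\le\|p_nf-1\|_\alpha^2\le C\varphi_\alpha^{-1}(n+1)
\]
for large $n$. The only subtlety is confirming $\|b_n h-1\|_{D_\alpha}$ stays bounded, which follows directly from the one-variable estimate since $\varphi_\alpha^{-1}(n+1)\to 0$.

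\medskip
\noindent\textbf{Lower bound.} For the sharp lower bound I would invoke Lemma~\ref{optimality} to strip off the non-vanishing factor. Suppose $h$ has a zero on $\TT$ and $g$ has no zeros on $\overline{\DD}$ (the other case is symmetric). Since $g$ admits a non-vanishing analytic continuation to the closed disk, the hypotheses of Lemma~\ref{optimality} are met, giving
\[
\dist_{\mathfrak{D}_\alpha}(1,f\cdot\mathfrak{P}_n)\ge C\,\dist_{\mathfrak{D}_\alpha}(1,h\cdot\mathfrak{P}_{2n}).
\]
Here I am using $h$ as a function of the single variable $z_2$, lifted to $\mathfrak{D}_\alpha$ as constant in $z_1$; for such a lift the distance in $\mathfrak{D}_\alpha$ to $h\cdot\mathfrak{P}_{2n}$ should coincide with (or be bounded below by) the one-variable distance $\dist_{D_\alpha}(1,h\cdot\Pol_{2n})$. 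Applying the sharpness half of Theorem~\ref{onevariableoptimal} to $h$, which has a zero on $\TT$, produces $\dist_{D_\alpha}^2(1,h\cdot\Pol_{2n})\ge\tilde C\varphi_\alpha^{-1}(2n+1)$, and since $\varphi_\alpha^{-1}(2n+1)$ and $\varphi_\alpha^{-1}(n+1)$ differ only by a constant factor (for both $\varphi_\alpha(s)=s^{1-\alpha}$ and $\varphi_1(s)=\log^+s$), this gives the desired $\tilde C\varphi_\alpha^{-1}(n+1)\le\dist_{\mathfrak{D}_\alpha}^2(1,f\cdot\mathfrak{P}_n)$.

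\medskip
\noindent\textbf{Main obstacle.} The delicate point is the reduction from the two-variable distance to the one-variable distance in the lower bound: one must verify that optimal approximation to $1/h$ inside $\mathfrak{D}_\alpha$, where the competing polynomials in $\mathfrak{P}_{2n}$ may genuinely depend on $z_1$, cannot beat the purely one-variable approximation. The separable structure is what saves this—projecting onto the $z_1$-constant subspace can only decrease the error—but writing this projection argument carefully, and tracking how the degree $2n$ in $\mathfrak{P}_{2n}$ interacts with the one-variable degree count, is where the real work lies. Everything else is bookkeeping with the norm identity $\|uv\|_\alpha=\|u\|_{D_\alpha}\|v\|_{D_\alpha}$ and the comparison $\varphi_\alpha^{-1}(2n+1)\asymp\varphi_\alpha^{-1}(n+1)$.
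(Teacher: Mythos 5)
Your proposal is correct and follows essentially the same route as the paper: the upper bound via the product $a_n(z_1)b_n(z_2)$ of one-variable optimal approximants, the separable norm identity, and a telescoping decomposition (the paper writes it as $q_nh(p_ng-1)+(q_nh-1)$, trivially equivalent to your three-term expansion); and the lower bound via Lemma~\ref{optimality} followed by reduction to the one-variable distance and the sharpness half of Theorem~\ref{onevariableoptimal} with $\varphi_\alpha^{-1}(2n+1)\asymp\varphi_\alpha^{-1}(n+1)$. The ``delicate point'' you flag is handled in the paper exactly as you suggest, in one line: by orthogonality of the monomials $z_1^kz_2^l$ in $\mathfrak{D}_\alpha$, dropping the $z_1$-dependent part of a competing polynomial can only decrease $\|ph-1\|_\alpha$, so $\dist_{\mathfrak{D}_\alpha}(1,h\cdot\mathfrak{P}_{2n})\geq\dist_{D_\alpha}(1,h\cdot\mathcal{P}_{2n})$.
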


\begin{proof}
    By Theorem \ref{onevariableoptimal}, for any polynomials $p_n(z_1)$ and $q_n(z_2)$ of degree less than or equal to $n,$ there exist constants $C_1$ and $C_2$ such that
        $$ \| p_n(z_1) g(z_1) - 1 \|_{D_{\alpha}} \leq C_1 \varphi^{-1/2}_{\alpha} (n+1) $$ and
        $$ \| q_n(z_2) h(z_2) - 1 \|_{D_{\alpha}} \leq C_2 \varphi^{-1/2}_{\alpha} (n+1). $$
Therefore
    \begin{align*}
        \| p_n(z_1) q_n(z_2) g(z_1)h(z_2) - 1 \|_{\alpha} & \leq
            \| q_n(z_2) h(z_2) (p_n(z_1)g(z_1) - 1)  \|_{\alpha}\\
                &+ \| q_n(z_2) h(z_2) - 1 \|_{\alpha} \\
        & \leq \|q_n h \|_{\alpha} \| p_n g - 1 \|_{\alpha}
            + \| q_n h - 1 \|_{\alpha} \\
        & =  \|q_n h \|_{D_{\alpha}}  \| p_n g - 1 \|_{D_{\alpha}}
            + \| q_n h - 1 \|_{D_{\alpha}} \\
        & \leq \left( \|q_n h - 1 \|_{D_{\alpha}} + 1 \right)  \| p_n g - 1 \|_{D_{\alpha}}\\
            &+ \| q_n h - 1 \|_{D_{\alpha}} \\
        & \leq C_2 C_1 \varphi^{-1}_{\alpha} (n+1) + (C_1+C_2) \varphi^{-1/2}_{\alpha} (n+1)\\
        & \leq C \varphi^{-1/2}_{\alpha} (n+1)
    \end{align*}
    for some constant $C.$ Therefore,
        $$  \dist^2_{\mathfrak{D}_\alpha}(1, f \cdot \mathfrak{P}_n) \leq C \varphi^{-1}_{\alpha} (n+1),$$ for all sufficiently large $n$, as desired.

Moreover, the inequality is sharp.  To see this,
suppose $h$ has at least one zero on $\TT$ and $g$ has no zeros in the closed unit disk.  
Then by Lemma \ref{optimality}, there exists a constant $C_1$ such that
\begin{equation}\label{ineq1}
\dist_{\mathfrak{D}_\alpha}(1, f \cdot \mathfrak{P}_n) \geq C_1 \dist_{\mathfrak{D}_\alpha}(1, h \cdot \mathfrak{P}_{2n}).
\end{equation}
Note that $h=h(z_2)$, and so, by orthogonality of monomials in $\mathfrak{D}_{\alpha}$,  the quantity $\dist_{\mathfrak{D}_\alpha}(1, h \cdot \mathfrak{P}_{2n})$ is bounded from below by $\dist_{\mathfrak{D}_{\alpha}}(1,h\cdot \mathcal{P}_{2n})=
\dist_{D_{\alpha}}(1,h\cdot \mathcal{P}_{2n})$.
Now by Theorem \ref{onevariableoptimal} applied to $h$, and since $\varphi_{\alpha}(2n+1)$ is comparable
to $\varphi_{\alpha}(n+1)$,
there exists a constant $C_2$ such that
\begin{equation}\label{ineq2}
\dist_{{D}_\alpha}^2(1, h \cdot \mathcal{P}_n) \geq C_2 \varphi_{\alpha}^{-1}(n+1).
\end{equation}
Thus, the inequalities in \eqref{ineq1} and \eqref{ineq2} imply
the desired result.
\end{proof}

\section{Norm comparisons and sharp decay of norms for the subspaces $\mathcal{J}_{\alpha, M, N}$}
\label{diagonalsection}

Let us now consider a third way of relating two variable cyclic functions to one variable cyclic functions.
In particular, we shall show that
the polynomials in equation \eqref{rieszpolys} furnish optimal approximants for a certain subclass of functions.
\subsection{The subspaces $\mathcal{J}_{\alpha, M, N}$}
In order to formulate our results, we need some notation.
For $-\infty<\alpha<\infty$ and integers $M,N\geq 1$, we
consider the closed subspaces
\begin{equation*}
\mathcal{J}_{\alpha, M, N}=\left\{f\in \mathfrak{D}_{\alpha}\colon f=\sum_{k=0}^{\infty}a_kz_1^{Mk}z_2^{Nk}\right\}.
\end{equation*}
For instance, $\mathcal{J}_{\alpha}=\mathcal{J}_{\alpha,1,1}$ consists
of the functions $f$ whose Taylor coefficients $(a_{k,l})$ vanish off
the diagonal $k=l$, meaning that $f(z_1,z_2)=f(z_1\cdot z_2)$.
The subspace $\mathcal{I}_{\alpha}$
consists of functions that do not depend on $z_2$.
\begin{thm}\label{sharpcyclicity}
Let $f\in \mathcal{J}_{\alpha, M, N}$ have the property that $R(f)=f(z^{1/M},1)$ is
a function that admits an analytic continuation to the closed unit disk, whose
zeros lie in $\mathbb{C}\setminus \mathbb{D}$.

Then
$f$ is cyclic in $\mathfrak{D}_{\alpha}$, and there exists a constant $C=C(\alpha,f,M,N)$ such that
\[\dist_{\mathfrak{D}_{\alpha}}(1,f\cdot \mathfrak{P}_{n})\leq \varphi^{-1}_{2\alpha}(n+1).\]

This result is sharp in the sense that, if $R(f)$ has at least one zero on $\mathbb{T}$, then there exists a constant $c=c(\alpha,f,M,N)$ such that, for large $n$,
\[c\varphi^{-1}_{2\alpha}(n+1)\leq \dist_{\mathfrak{D}_{\alpha}}(1,f\cdot \mathfrak{P}_n).\]

The same conclusions remain valid for $f\in \mathcal{I}_{\alpha}$, 
with the rate $\varphi^{-1}_{2\alpha}$ replaced by $\varphi^{-1}_{\alpha}$.
\end{thm}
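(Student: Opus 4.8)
The plan is to use the restriction map $R$ to transfer the entire problem to one variable: on $\mathcal{J}_{\alpha,M,N}$ the operator $R$ is, up to equivalence of norms, an identification with $D_{2\alpha}$ that converts multiplication by lattice polynomials into ordinary multiplication, so that Theorem \ref{onevariableoptimal} applies directly to $R(f)$. To make this precise I would first record the norm comparison. For $f=\sum_k a_k z_1^{Mk}z_2^{Nk}$ one has $\|f\|_\alpha^2=\sum_k|a_k|^2(Mk+1)^\alpha(Nk+1)^\alpha$ and $\|R(f)\|_{D_{2\alpha}}^2=\sum_k|a_k|^2(k+1)^{2\alpha}$; since $M,N\ge1$, the chain $k+1\le Mk+1\le M(k+1)$ (and its analogue for $N$) gives $(Mk+1)^\alpha(Nk+1)^\alpha\asymp(k+1)^{2\alpha}$ with constants depending only on $M,N,\alpha$, regardless of the sign of $\alpha$. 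Hence $c_1\|R(g)\|_{D_{2\alpha}}\le\|g\|_\alpha\le c_2\|R(g)\|_{D_{2\alpha}}$ for all $g\in\mathcal{J}_{\alpha,M,N}$, and a direct coefficient computation shows $R(\tilde p\, f)=R(\tilde p)R(f)$ whenever $\tilde p=\sum_t c_t z_1^{Mt}z_2^{Nt}$.

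For the upper bound and cyclicity, set $m=\lfloor n/\max\{M,N\}\rfloor$ and let $q_m\in\mathcal{P}_m$ be a near-optimal approximant to $1/R(f)$ furnished by Theorem \ref{onevariableoptimal} applied in $D_{2\alpha}$, so that $\|q_mR(f)-1\|_{D_{2\alpha}}^2\le C\varphi_{2\alpha}^{-1}(m+1)$. Lifting $q_m(z)=\sum_t d_t z^t$ to $P=\sum_t d_t z_1^{Mt}z_2^{Nt}$, the choice of $m$ guarantees $P\in\mathfrak{P}_n$, while $Pf-1\in\mathcal{J}_{\alpha,M,N}$ with $R(Pf-1)=q_mR(f)-1$; the norm comparison then gives $\|Pf-1\|_\alpha^2\le c_2^2\,C\,\varphi_{2\alpha}^{-1}(m+1)$. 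Because $m+1\asymp n+1$ and $\varphi_{2\alpha}^{-1}(cs)\asymp\varphi_{2\alpha}^{-1}(s)$, this yields the stated upper estimate for $\dist^2_{\mathfrak{D}_\alpha}(1,f\cdot\mathfrak{P}_n)$, and cyclicity follows since the bound tends to $0$.

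For sharpness I would, given any $p\in\mathfrak{P}_n$, project $p\cdot f$ orthogonally onto the closed subspace $\mathcal{J}_{\alpha,M,N}$. The crucial structural fact is that this projection equals $\tilde p\,f$, where $\tilde p=\sum_{Mt\le n,\,Nt\le n}c_{Mt,Nt}z_1^{Mt}z_2^{Nt}$ is the diagonal part of $p$: a monomial $z_1^{k+Mj}z_2^{l+Nj}$ of $pf$ lands on the lattice $(Ms,Ns)$ only when $k=M(s-j)$ and $l=N(s-j)$, which forces precisely the indices $(Mt,Nt)$ of $p$ to survive. Since $1\in\mathcal{J}_{\alpha,M,N}$, orthogonal projection cannot increase the distance, so $\|pf-1\|_\alpha\ge\|\tilde pf-1\|_\alpha$; the norm comparison together with the sharp lower bound of Theorem \ref{onevariableoptimal} applied to $R(\tilde p)R(f)-1$ (legitimate because $R(f)$ has a zero on $\TT$) then gives $\|pf-1\|_\alpha^2\ge c\,\varphi_{2\alpha}^{-1}(n+1)$, and taking the infimum over $p$ finishes this direction.

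The $\mathcal{I}_\alpha$ case runs along the same lines but is simpler: here the analogue of $R$ is the isometric identification of $\mathcal{I}_\alpha$ with $D_\alpha$ (the $z_2$-degree is always $0$, so the weight $(l+1)^\alpha$ equals $1$), lifting a degree-$n$ one-variable polynomial costs no degree, and the projection of $pf$ onto $\mathcal{I}_\alpha$ is $p_0(z_1)f(z_1)$ with $p_0=\sum_k c_{k,0}z_1^k$; consequently the governing rate is $\varphi_\alpha^{-1}$ rather than $\varphi_{2\alpha}^{-1}$. I expect the main obstacle to be the sharpness direction, and within it the verification of the projection identity $Q(pf)=\tilde p f$ together with the degree bookkeeping $m\asymp n$ and the comparability $\varphi_{2\alpha}^{-1}(m+1)\asymp\varphi_{2\alpha}^{-1}(n+1)$; once the norm comparison and this identity are in hand, both the upper bound and cyclicity are routine.
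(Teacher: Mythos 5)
Your proof is correct and follows essentially the same route as the paper's: a norm-comparison between $\mathcal{J}_{\alpha,M,N}$ and $D_{2\alpha}$ (the paper's Lemma \ref{comparisonlemma}), the diagonal-extraction step $\|pf-1\|_\alpha\geq\|\tilde p f-1\|_\alpha$ (the paper's Lemma \ref{polyextraction}), lifting one-variable near-optimal approximants for the upper bound, and invoking Theorem \ref{onevariableoptimal} in $D_{2\alpha}$ (resp.\ $D_\alpha$ for $\mathcal{I}_\alpha$) in both directions. The only differences are cosmetic: you phrase the extraction as a projection-contraction argument where the paper uses Pythagoras, and you make explicit the degree bookkeeping $m=\lfloor n/\max\{M,N\}\rfloor$ that the paper leaves implicit by detailing only the case $M=N=1$.
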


We should point out that the hypotheses of Theorem \ref{sharpcyclicity} imply
that $f$ is 
non-vanishing in $\mathbb{D}^2$. For instance, suppose $f\in \mathcal{J}_{\alpha}$ has $f(z_1,z_2)=0$
for some $(z_1,z_2)\in \mathbb{D}^2$. Then the function $R(f)$ will have a zero at
$z=|z_1z_2|e^{i(\arg{z_1}+\arg{z_2})} \in \mathbb{D}$.
\begin{rem}\label{counterex}
It is straight-forward to check that functions like $f(z_1,z_2) = 1 - z_1$, $f(z_1,z_2) = (1 - z_1z_2)^N$, $N\in \mathbb{N}$, and
$f(z_1,z_2)=z_1^2z_2^2-2\cos \theta z_1z_2+1$, $\theta\in \mathbb{R}$, satisfy the assumptions of Theorem \ref{sharpcyclicity}.

The arguments used in the proof of Theorem \ref{sharpcyclicity} imply a function
$f\in \mathcal{J}_{\alpha,M,N}$ can {\it fail} to be cyclic in $\mathfrak{D}_{\alpha}$ 
when $\alpha>1/2$. For instance, the function $f(z_1,z_2)=1-z_1z_2$ is cyclic 
if and only if $\alpha\leq1/2$ (see Example \ref{twistedcircle} below), and the Riesz polynomials 
\eqref{rieszpolys} are optimal approximants to $1/f$ when $\alpha\leq 1/2$.
\end{rem}

\subsection{Liftings, restrictions, and norm comparisons} 
The proof of Theorem \ref{sharpcyclicity} ultimately relies on Theorem 
\ref{onevariableoptimal}, and comparison between the norm of $\mathfrak{D}_{\alpha}$ and 
that of $D_{2\alpha}$.

Suppose that for some real $\alpha$, the function $F=\sum_{k=0}^{\infty}a_kz^k$ belongs to $D_{\alpha}$, 
a Dirichlet-type
space on the unit disk. We define $E\colon D_{\alpha}\to \mathfrak{D}_{\alpha}$ by
\[ E(F)(z_1,z_2)=F(z_1).\]
In addition,
if $f\in \mathcal{I}_{\alpha}$, the mapping $C\colon \mathfrak{D}_{\alpha}\to D_{\alpha}$ given 
by $C(f)(z)=f(z,1)$ is well-defined, and we have $E\circ
C_{|\mathcal{I}_{\alpha}}=\mathrm{id}_{\mathcal{I}_{\alpha}}$. Moreover,
it is immediate that
\begin{equation*}
\|E(F)\|_{\alpha}=\|F\|_{D_{\alpha}},
\quad F \in D_{\alpha}
\end{equation*}
and
\begin{equation*}
\|f\|_{\alpha}=\|C(f)\|_{D_{\alpha}},\quad f\in \mathcal{I}_{\alpha}.
\end{equation*}

Another embedding is the following one. For
$\alpha \in \mathbb{R}$ fixed, define the mappings
\begin{equation*}
L_{M,N}\colon D_{2\alpha}\to \mathfrak{D}_{\alpha} \quad
\textrm{via}\quad L_{M,N}(F)(z_1,z_2)=F(z_1^M\cdot z_2^N),
\label{liftingop}
\end{equation*}
and
\begin{equation*}
R_{M,N}\colon \mathcal{J}_{\alpha, M, N}\to D_{2\alpha}\quad
\textrm{via}\quad
R_{M,N}(f)(z)=f(z^{1/M},1).
\end{equation*}
We initially view $f(z^{1/M},1)$ as a formal expression,
but the assumption $\sum_{k}(k+1)^{2\alpha}|a_k|^2<\infty$ implies that
$f(z_1^{1/M},1)$ is actually a well-defined holomorphic function on
$\mathbb{D}$; this will become apparent below.
By definition, we again have
$L\circ R_{|\mathcal{J}_{\alpha,M,N}}
=\mathrm{id}_{\mathcal{J}_{\alpha,M,N}}$.
\begin{lem}\label{comparisonlemma}
For $F\in D_{2\alpha}$ and $f\in \mathcal{J}_{\alpha,M,N}$, there are
constants $c_1=c_1(\alpha,M,N)$ and $c_2=c_2(\alpha,M,N)$ such that
\begin{equation*}
\|L_{M,N}(F)\|_{\alpha}\leq c_1\|F\|_{D_{2\alpha}}
\end{equation*}
and
\begin{equation*}
c_2
\|R(f)\|_{D_{2\alpha}}\leq \|f\|_{\alpha}
\end{equation*}
hold.
In particular, if $f\in \mathcal{J}_{\alpha,M,N}$, then
\begin{equation}
c_2\|R(f)\|_{D_{2\alpha}}\leq \|f\|_{\alpha}\leq c_1\|R(f)\|_{D_{2\alpha}} .
\label{normcomparison}
\end{equation}
\end{lem}
\begin{proof}
We provide the proof of the second inequality; the proof of the first is
analogous.

We first observe that for any $\alpha \in \mathbb{R}$ and $M\geq 1$,
there exist constants $c_1(\alpha,M)$ and $c_2(\alpha,M)$ such that
\[c_1(\alpha,M)(k+1)^{\alpha}\leq (Mk+1)^{\alpha}\leq c_2(\alpha,M)(k+1)^{\alpha},\]
for any $k\in \mathbb{N}$.

Thus, writing $R(f)(z)=\sum_{k=0}^{\infty}a_kz^k$, we have
\begin{align*}
\|R(f)\|^2_{D_{2\alpha}}&=\sum_{k=0}^{\infty}(k+1)^{2\alpha}|a_k|^2\\
&=\sum_{k=0}^{\infty}(k+1)^{\alpha}(k+1)^{\alpha}|a_k|^2\\
&\leq [c_1(\alpha, M)c_1(\alpha,N)]^{-1}
\sum_{k=0}^{\infty}(Mk+1)^{\alpha}(Nk+1)^{\alpha}|a_k|^2\\
&=[c_1(\alpha, M)c_1(\alpha,N)]^{-1}
\|f\|^2_{\alpha},
\end{align*}
which proves the assertion.

The two-sided bound \eqref{normcomparison} follows from the one-sided bounds 
and the fact that $f=L(R(f))$.
\end{proof}
In particular, we see from the proof of Lemma \ref{comparisonlemma}
that in the case $M=N=1$, the equalities
\[\|L(F)\|_{\alpha}=\|F\|_{D_{2\alpha}}\quad \textrm{and}\quad
\|R(f)\|_{D_{2\alpha}}=\|f\|_{\alpha}\]
hold and hence $R$ is an isometric isomorphism between $\mathcal{J}_{\alpha}$ and $D_{2\alpha}$.

\subsection{Sharpness of norm decay}
We shall use Lemma \ref{comparisonlemma}, along with the following lemma,
to prove Theorem \ref{sharpcyclicity}.
\begin{lem}\label{polyextraction}
Suppose $f\in \mathcal{J}_{\alpha,M,N}$ for some $\alpha\in \mathbb{R}$ and some
integers $M,N\geq 1$.
Let $r_n=\sum_{k=0}^n\sum_{l=0}^nc_{k,l}z_1^kz_2^l$ be an arbitrary polynomial, let $s_n$ be its projection onto $\mathcal{J}_{\alpha,M,N}$,
\[s_n
=\sum_{\{k\colon Mk,Nk\leq n\}}c_{Mk,Nk}z_1^{Mk}z_2^{Nk},\]
and let $\tilde{s}_n=r_n-s_n$.

Then
\[\|r_nf-1\|_{\alpha}\geq \|s_nf-1\|_{\alpha}.\]
\end{lem}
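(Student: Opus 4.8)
The plan is to exploit the orthogonality of monomials in $\mathfrak{D}_{\alpha}$ together with the Pythagorean identity. Recall from the definition \eqref{JRnorm} that the monomials $z_1^kz_2^l$ are pairwise orthogonal in $\mathfrak{D}_{\alpha}$, and that $\mathcal{J}_{\alpha,M,N}$ is precisely the closed span of the ``diagonal'' monomials $z_1^{Mk}z_2^{Nk}$, $k=0,1,\ldots$; its orthogonal complement is the closed span of the remaining monomials $z_1^kz_2^l$ with $(k,l)$ not of the form $(Mj,Nj)$. First I would decompose $r_nf-1=(s_nf-1)+\tilde{s}_nf$ and argue that the two summands lie in $\mathcal{J}_{\alpha,M,N}$ and in its orthogonal complement, respectively; the result then follows by orthogonality.

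To see that $s_nf-1\in\mathcal{J}_{\alpha,M,N}$, write $f=\sum_k a_kz_1^{Mk}z_2^{Nk}$. Since $s_n$ is supported on the same lattice, every monomial in the product $s_nf$ has the form $z_1^{M(j+k)}z_2^{N(j+k)}$ and so lies on the diagonal lattice; because $1=z_1^0z_2^0$ also lies on that lattice, we conclude $s_nf-1\in\mathcal{J}_{\alpha,M,N}$.

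The key step is to verify that $\tilde{s}_nf$ is orthogonal to $\mathcal{J}_{\alpha,M,N}$, which is the only genuinely computational part. Writing $\tilde{s}_n=\sum c_{k,l}z_1^kz_2^l$, where the sum runs over pairs $(k,l)$ with $(k,l)\neq(Mj,Nj)$ for every integer $j$, a typical monomial appearing in $\tilde{s}_nf$ is $z_1^{k+Mm}z_2^{l+Nm}$. If such a monomial landed on the diagonal lattice, then $k+Mm=Mj'$ and $l+Nm=Nj'$ for some integer $j'$, forcing $(k,l)=(M(j'-m),N(j'-m))$ and contradicting the choice of $(k,l)$. Hence $\tilde{s}_nf$ contains no diagonal monomials and lies in $\mathcal{J}_{\alpha,M,N}^{\perp}$.

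Finally, since $s_nf-1$ and $\tilde{s}_nf$ are orthogonal, the Pythagorean identity gives
\[\|r_nf-1\|_{\alpha}^2=\|s_nf-1\|_{\alpha}^2+\|\tilde{s}_nf\|_{\alpha}^2\geq\|s_nf-1\|_{\alpha}^2,\]
and taking square roots yields the claim. I expect no real obstacle beyond the lattice bookkeeping in the orthogonality step; everything else is a direct consequence of the Hilbert space structure of $\mathfrak{D}_{\alpha}$.
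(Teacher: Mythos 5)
Your proof is correct and takes essentially the same route as the paper's: decompose $r_nf-1=(s_nf-1)+\tilde{s}_nf$, observe that the first summand lies in $\mathcal{J}_{\alpha,M,N}$ and the second in its orthogonal complement (using orthogonality of the monomials $z_1^kz_2^l$), and conclude by the Pythagorean identity. If anything, your lattice bookkeeping showing $\tilde{s}_nf\in\mathcal{J}_{\alpha,M,N}^{\perp}$ is more careful than the paper's proof, which only asserts $\tilde{s}_nf\notin\mathcal{J}_{\alpha,M,N}$ before invoking orthogonality.
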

\begin{proof}
We begin by noting again that monomials of the form $\{z_1^kz_2^l\}$ form
an orthogonal basis for $\mathfrak{D}_{\alpha}$. Next, we have
$s_nf\in \mathcal{J}_{\alpha,M,N}$, and $\tilde{s}_nf \notin \mathcal{J}_{\alpha, M,N}$,
and then, by the previous observation, $s_nf-1\perp \tilde{s}_nf$.

This means that
\begin{align*}
\|r_nf-1\|^2_{\alpha}&=\|s_nf-1+\tilde{s}_nf\|^2_{\alpha}\\
&= \|s_nf-1\|^2_{\alpha}+\|\tilde{s}_nf\|^2_{\alpha}\\
&\geq \|s_nf-1\|^2_{\alpha},
\end{align*}
and the lemma is proved.
\end{proof}
An analogous result holds for functions in the subspace $\mathcal{I}_{\alpha}$.
\begin{proof}[Proof of Theorem \ref{sharpcyclicity}]
We present the details for functions $f\in \mathcal{J}_{\alpha}$; the same
type of arguments work for $\mathcal{J}_{\alpha, M, N}$, with the appropriate inequalities from Lemma
\ref{comparisonlemma} in place of equalities, and also for $f\in \mathcal{I}_{\alpha}$.

We begin by establishing the lower bound.
Let $r_n=\sum_k\sum_lc_{k,l}z_1^kz_2^l$ be any polynomial, and extract the
diagonal part
$s_n$ from $r_n$ as in the preceding lemma.
Note that by construction, $s_nf-1\in \mathcal{J}_{\alpha}$ for each
$\alpha$. By Lemma \ref{polyextraction} and the norm inequality
\eqref{normcomparison}, we obtain
\begin{align*}
\|r_nf-1\|_{\alpha}&\geq \|s_nf-1\|_{\alpha}\\
&=\|R(s_nf-1)\|_{D_{2\alpha}}=
\|R(s_n)R(f)-1\|_{D_{2\alpha}}.
\end{align*}
It is assumed that $R(f)$ satisfies the
hypotheses of Theorem \ref{onevariableoptimal}; the theorem then asserts that
$\dist^2_{D_{2\alpha}}(1,R(f)\cdot \mathfrak{P}_n)\geq \tilde{C}\varphi^{-1}_{\alpha}(n+1)$.
In particular, this yields a lower
bound for $\|R(s_n)R(f)-1\|_{D_{2\alpha}}$, and
the lower bound on $\dist_{\mathfrak{D}_{\alpha}}(1,f\cdot \mathfrak{P}_n)$ follows.

To obtain the upper bound, it is enough to exhibit a concrete sequence of
polynomials $(p_n)$ having $\|p_nf-1\|^2_{\alpha}\leq C(\alpha,f)\varphi^{-1}_{2\alpha}(n+1)$. However, since $R(f)$ satisfies the hypotheses of
Theorem \ref{onevariableoptimal}, there exists a sequence $(q_n)$ of polynomials in
one variable that achieves
\[\|q_n R(f)-1\|_{D_{2\alpha}}^2\leq C(\alpha,f)\varphi^{-1}_{2\alpha}(n+1)\]
for large enough $n$. But then we can define $p_n=L(q_n)\in \mathcal{J}_{\alpha}$, and
the desired estimate follows since
\[\|L(q_n)f-1\|^2_{\alpha}=\|R(L(q_n))R(f)-1\|^2_{D_{2\alpha}}=\|q_n R(f)-1\|^2_{D_{2\alpha}}\]
by Lemma \ref{comparisonlemma}. The proof is complete.
\end{proof}
Note that if $R(f)$ is a polynomial with only simple zeros on the
unit circle $\mathbb{T}$, then it is shown in \cite[Section 3]{BCLSS13} that the
one-variable Riesz polynomials achieve the norm decay obtained above.
In the situation $M=N=1$ then, we have $L(q_n)(z_1,z_2)=p_n(z_1,z_2)$, where
$p_n$ are the Riesz-type polynomials defined in equation \eqref{rieszpolys}.
\section{Polynomials with zeros on
$\partial \mathbb{D}^2$ and measures of finite
energy}\label{examplesSection}
Let us now examine the relationship between cyclicity and boundary zero sets of functions in $\mathfrak{D}_{\alpha}$. Surprisingly,
some functions with large zero sets in some sense \textit{are} cyclic while others
with smaller zero sets are not.
\subsection{Examples}
Let us examine a few simple examples.
\begin{ex}\label{onevariable}
Set $f(z_1,z_2)=1-z_1$. Then $f$ has zero set
\[\mathcal{Z}(f)=\{1\}\times \overline{\mathbb{D}},\]
a (real) $2$-dimensional subset of the topological boundary of $\mathbb{D}^2$
which meets the distinguished boundary along the $1$-dimensional curve
$\{1\}\times \mathbb{T}$.
Note that $f$ is an example of a function of the product type $g(z_1)h(z_2)$ with $g(z_1) = 1-z_1$ and $h(z_2) = 1,$ and therefore
by Proposition \ref{characterize}, $f$ is cyclic in
$\mathfrak{D}_{\alpha}$ if and only if $\alpha \leq 1.$
\end{ex}

\begin{ex}\label{twistedcircle}
Consider the function $f(z_1,z_2)=1-z_1z_2$. The part of the zero set of $f$ that lies on
the boundary of the bidisk,
\[\mathcal{Z}(f)=\{(e^{i\theta}, e^{-i\theta})\colon \theta \in [0, 2\pi)\},\]
can be viewed as a $1$-dimensional real curve contained in the distinguished
boundary $\mathbb{T}^2$. One verifies that all the points in $\mathcal{Z}(f)$ are simple zeros.

Since
\[\frac{1}{f(z_1,z_2)}
=\sum_{k=0}^{\infty}\sum_{l=0}^{\infty}\delta_{k,l}z_1^kz_2^k=\sum_{k=0}^{\infty}z_1^kz_2^k,\]
we have $\|1/f\|^2_{-1}=\sum_{k=0}^{\infty}(1+k)^{-2}<\infty$ but
$\|1/f\|^2_{0}=\sum_{k=0}^{\infty}1=+\infty$, and so $f$ is invertible in
the Bergman space, and indeed in $\mathfrak{D}_{\alpha}$ whenever $\alpha<-1/2$,
but not in the Hardy space of the bidisk.

Nevertheless, by Theorem \ref{sharpcyclicity}, $f$ is cyclic in $\mathfrak{D}_{\alpha}$ if and only if  $\alpha\leq1/2$. Note in particular that this function
is {\it not} cyclic in the classical Dirichlet space of the bidisk!

Explicit computations with the Riesz polynomials in \eqref{rieszpolys} recover the upper bound
in Theorem \ref{sharpcyclicity}.
Namely, we have
\[p_n(z_1,z_2)f(z_1,z_2)-1=-\frac{1}{\varphi_{\alpha}(n+1)}\sum_{k=1}^{n+1}
[\varphi_{\alpha}(k)-\varphi_{\alpha}(k-1)](z_1z_2)^k,\]
and then, since $|\varphi_{\alpha}(k)-\varphi_{\alpha}(k-1)|^2\leq
C(\alpha)(k-1)^{-2\alpha}$, we obtain
\[\|p_nf-1\|^2_{\alpha}\leq \frac{C_1(\alpha)}{(n+1)^{1-2\alpha}}.\]
Thus $\|p_nf-1\|^2_{\alpha}\to 0$ as $n\to \infty$ and
$f$ is cyclic, provided $\alpha\le1/2$.

In fact, considering instead functions of the form $f=1-z_1^Mz_2^N$ for
integer $M,N\geq 1$, and performing the analogous computations, we obtain
\begin{align}
\label{fdecay}
\|p_nf-1\|^2_{\alpha}\leq\frac{C_1(\alpha, M, N)}{(n+1)^{1-2\alpha}}
\end{align}
with a constant $C_{1}(\alpha,M,N)$ which
does not depend on $n$.
\end{ex}

\begin{ex}\label{twocircles}
We examine $f(z_1,z_2)=1-z_1-z_2+z_1z_2=(1-z_1)(1-z_2)$. The zero set of
$f$ is
\[\mathcal{Z}(f)=(\{1\}\times\overline{\mathbb{D}})\cup(\overline{\mathbb{D}}\times\{1\}),\]
a $2$-dimensional set that extends into the
topological boundary of the bidisk. Its intersection with $\mathbb{T}^2$ consists of the curves
\[\mathcal{Z}(f)=(\{1\}\times \mathbb{T})\cup(\mathbb{T}\times \{1\}).\]
All zeros of $f$ are simple,
with the exception of the point $(1,1)$ which has order $2$.
Since
\[\frac{1}{f(z_1,z_2)}=\sum_{k=0}^{\infty}\sum_{l=0}^{\infty}z_1^kz_2^l,\]
it follows that $1/f \notin A^2(\mathbb{D}^2)$.

Note that again, $f$ is separable with
$g(z_1) = 1 - z_1$ and $h(z_2) = 1-z_2$, and therefore $f$ is cyclic in $\mathfrak{D}_{\alpha}$ if and
only if $\alpha \leq 1.$

In this case, computing with the Riesz polynomials leads to misleading estimates. Defining polynomials $p_n$, as before, via \eqref{rieszpolys},
we compute
\begin{align*}
p_nf&=-\frac{1}{(n+1)^{1-\alpha}}\sum_{k=1}^{n+1}[k^{1-\alpha}-(k-1)^{1-\alpha}](z_1^k+z_2^k)\\
&+\frac{1}{(n+1)^{1-\alpha}}\sum_{k=1}^{n+1}[k^{1-\alpha}-(k-1)^{1-\alpha}]z_1^kz_2^k.
\end{align*}
We use the estimates from the previous example, and exploit the one-variable estimates from
\cite{BCLSS13}, to obtain
\begin{align*}
\|p_nf-1\|^2_{\mathfrak{D}_{\alpha}}&=\frac{2}{(n+1)^{2-2\alpha}}\sum_{k=1}^{n+1}(k+1)^{\alpha}[k^{1-\alpha}-(k-1)^{1-\alpha}]^2\\&+\frac{1}{(n+1)^{2-2\alpha}}\sum_{k=1}^{n+1}(k+1)^{2\alpha}[k^{1-\alpha}-(k-1)^{1-\alpha}]^2\\
&\leq\frac{c_1(\alpha)}{(n+1)^{1-\alpha}}+\frac{c_2(\alpha)}{(n+1)^{1-2\alpha}}.
\end{align*}
The first term in the right-hand side dominates when $\alpha<0$, whereas the second is larger when $\alpha>0$. In particular, the estimate does show that $f$ is
cyclic in $\mathfrak{D}_{\alpha}$ provided $\alpha \leq 1/2$. However, as we have seen,
the rate is not optimal, and $f$ remains cyclic when $\alpha>1/2$.
\end{ex}

Note the interesting contrast between Examples \ref{twistedcircle}
and \ref{twocircles}:  the function in Example \ref{twistedcircle}
is not cyclic in the (classical) Dirichlet space of the bidisk, and
yet in some sense has a much smaller zero set than the function in
Example \ref{twocircles}, which is cyclic! On the other hand, as a
kind of dual phenomenon, $f=1-z_1z_2$ exhibits a faster rate of
decay of norms $\|p_nf-1\|_{\alpha}$ for $\alpha<0$ than does
$f=(1-z_1)(1-z_2)$.
\subsection{Measures of finite energy}
It would be interesting to understand the relationship between cyclicity and boundary 
zero sets--in particular, given a function $f$, to find a measure whose support 
lies on the zero set of the boundary values of $f$ that relates to the cyclicity properties of $f$.

We now specialize to the Dirichlet space $\mathfrak{D}=\mathfrak{D}_1$ and give a necessary condition for a function to be cyclic. This condition involves the notion of capacity, and represents a straight-forward generalization of results of Brown and Shields in the one-variable case.

\begin{defn}
Let $E\subset \mathbb{T}^2$ be a Borel set. We say that a probability 
measure $\mu$ supported in $E$ has {\it finite logarithmic energy} if
\begin{equation*}
I[\mu]=\int_{\mathbb{T}^2}\int_{\mathbb{T}^2}\log\frac{e}{|e^{i\theta_1}-e^{i\vartheta_1}|}\log\frac{e}{|e^{i\theta_2}-e^{i\vartheta_2}|}d\mu(\theta_1,\theta_2)d\mu(\vartheta_1,\vartheta_2)<\infty.
\label{logenergy}
\end{equation*}
If $E$ supports no such measure, we say that $E$ has {\it logarithmic capacity} $0$.
\end{defn}
The energy of $\mu$ can be expressed in terms of its Fourier coefficients
\[\hat{\mu}(k,l)=\int_{\mathbb{T}^2}e^{-i(k\theta_1+l\theta_2)}d\mu(\theta_1,\theta_2),
\quad k,l \in \mathbb{Z}.\]
Namely, viewing the integral defining the energy as a convolution with a kernel of positive type (cf. \cite[Chapter 10]{KahBook}), we obtain
\[I[\mu]=\sum_{k=-\infty}^{\infty}\sum_{l=-\infty}^{\infty}\hat{h}(k,l)|\hat{\mu}(k,l)|^2,\]
and computing the Fourier coefficients $\hat{h}(k,l)$ of the product logarithm (see \cite[p. \,294]{BS84} for details), we find that
\begin{equation}
I[\mu]=1+\sum_{k=1}^{\infty}\frac{|\hat{\mu}(k,0)|^2}{k}
+\sum_{l=1}^{\infty}\frac{|\hat{\mu}(0,l)|^2}{l}+\frac{1}{2}\sum_{k \in \mathbb{Z}\setminus \{0\}}\sum_{l=1}^{\infty}\frac{|\hat{\mu}(k,l)|^2}{|k|l}.
\label{logenergycoeffs}
\end{equation}
The notion of energy now allows us to identify some non-cyclic $f\in \mathfrak{D}$ by looking at their
boundary zero sets. To make this notion precise, we note that one can show that functions $f\in \mathfrak{D}$ have radial limits
$f^*(e^{i\theta_1},e^{i\theta_2})=\lim_{r\to 1^{-}}
f(re^{i\theta_1},re^{i\theta_2})$ quasi-everywhere. That is, the limit exists for all points outside a set of capacity $0$, and hence it makes sense to speak of the capacity of the set $\mathcal{Z}(f^*)$.
(In fact, Kaptano\u{g}lu considers more general approach regions in
\cite{Kap94}, but we do not need this here.)
\begin{prop}\label{easyBrownShields}
If $f\in \mathfrak{D}$ and $\mathcal{Z}(f^*)$ has positive logarithmic capacity, then $f$ is
not cyclic.
\end{prop}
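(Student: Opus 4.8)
The plan is to carry over the classical Brown--Shields argument from the one-variable Dirichlet space (see \cite{BS84}) to the bidisk: I will use a finite-energy probability measure supported on $\mathcal{Z}(f^*)$ to build a bounded linear functional on $\mathfrak{D}$ that annihilates every polynomial multiple of $f$, yet does not annihilate the constant function $1$. This contradicts cyclicity, since cyclicity of $f$ means $1$ lies in the closed span of the polynomial multiples of $f$. The hypothesis that $\mathcal{Z}(f^*)$ has positive logarithmic capacity is exactly the statement that this set supports a probability measure $\mu$ with $I[\mu]<\infty$, so such a $\mu$ is available, and its Fourier coefficients obey the energy identity \eqref{logenergycoeffs}.

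First I would attach to $\mu$ the functional $\Lambda_\mu(g)=\sum_{k,l\ge 0}a_{k,l}\overline{\hat{\mu}(k,l)}$ for $g(z_1,z_2)=\sum_{k,l\ge 0}a_{k,l}z_1^kz_2^l\in\mathfrak{D}$, which for polynomials is just integration of the boundary values, $\Lambda_\mu(p)=\int_{\mathbb{T}^2}p\,d\mu$. A single application of the Cauchy--Schwarz inequality gives
\[
|\Lambda_\mu(g)|^2 \le \Big(\sum_{k,l\ge 0}(k+1)(l+1)|a_{k,l}|^2\Big)\Big(\sum_{k,l\ge 0}\frac{|\hat{\mu}(k,l)|^2}{(k+1)(l+1)}\Big),
\]
where the first factor is $\|g\|_1^2$. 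The point is that the second factor is controlled by the energy: splitting the sum according to whether $k$ or $l$ vanishes and using $\tfrac{1}{(k+1)(l+1)}\le\tfrac{1}{kl}$ in the bulk, each block is dominated by the corresponding block of \eqref{logenergycoeffs}, so that the second factor is at most $2\,I[\mu]$. Hence $\Lambda_\mu$ is bounded on $\mathfrak{D}$ with $\|\Lambda_\mu\|\le\sqrt{2\,I[\mu]}$.

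Next I would identify $\Lambda_\mu(g)$ with $\int_{\mathbb{T}^2}g^*\,d\mu$ for every $g\in\mathfrak{D}$. Writing $g_r(z_1,z_2)=g(rz_1,rz_2)$, one has $g_r\to g$ in $\mathfrak{D}$, so $\Lambda_\mu(g_r)\to\Lambda_\mu(g)$, while $\Lambda_\mu(g_r)=\int_{\mathbb{T}^2}g_r\,d\mu$ because $g_r$ extends continuously to $\overline{\mathbb{D}}^2$; moreover $g_r\to g^*$ quasi-everywhere, hence $\mu$-almost everywhere since a finite-energy measure charges no set of capacity zero. Granting the identification, I apply it to $g=p_nf$, where $(p_n)$ is a hypothetical cyclic sequence with $\|p_nf-1\|_1\to 0$: because $\operatorname{supp}\mu\subseteq\mathcal{Z}(f^*)$, the boundary function $(p_nf)^*=p_n\,f^*$ vanishes $\mu$-a.e., so $\Lambda_\mu(p_nf)=0$ for every $n$. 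But continuity of $\Lambda_\mu$ forces $\Lambda_\mu(p_nf)\to\Lambda_\mu(1)=\hat{\mu}(0,0)=1$, a contradiction. Therefore no such sequence exists and $f$ is not cyclic.

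I expect the main obstacle to be precisely the identification $\Lambda_\mu(g)=\int_{\mathbb{T}^2}g^*\,d\mu$ for general $g\in\mathfrak{D}$, rather than merely for polynomials, since it requires passing to the boundary against $\mu$: the limit $\int g_r\,d\mu$ exists (the increments are bounded by $\sqrt{2\,I[\mu]}\,\|g_\rho-g_r\|_1$), but matching this limit with $\int g^*\,d\mu$ needs a uniform-integrability or maximal-function control of the dilates $\{g_r\}$ with respect to $\mu$, combined with the quasi-everywhere existence of radial limits for functions in $\mathfrak{D}$. This potential-theoretic input is the two-variable analogue of the tools used in \cite{BS84}; once it is in place, the boundedness of $\Lambda_\mu$ and the vanishing $\Lambda_\mu(p_nf)=0$ are routine, and the contradiction closes the argument.
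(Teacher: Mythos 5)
Your proposal is correct and takes essentially the same route as the paper: your coefficient functional $\Lambda_\mu(g)=\sum_{k,l}a_{k,l}\overline{\hat{\mu}(k,l)}$ is precisely the paper's functional $g\mapsto\langle g,C[\mu]\rangle$ induced by the Cauchy integral $C[\mu]$ under the duality pairing between $\mathfrak{D}$ and $A^2(\mathbb{D}^2)$, and your Cauchy--Schwarz/energy estimate is the same comparison with \eqref{logenergycoeffs} that the paper uses to show $C[\mu]\in A^2(\mathbb{D}^2)$. Both arguments then conclude by annihilation of $[f]$ and defer the same potential-theoretic step (identifying the functional with integration of boundary values against $\mu$, via quasi-everywhere radial limits and the fact that finite-energy measures charge no capacity-zero sets) to the Brown--Shields machinery of \cite{BS84}.
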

\begin{proof}
The proof is completely analogous to that of \cite[Theorem 5]{BS84}; we refer the reader to the paper of Brown and Shields for details and present the arguments in condensed form here.

The key idea is to identify the Bergman space $A^2(\mathbb{D}^2)$ with the dual of $\mathfrak{D}$ via the pairing
\[\langle f,g\rangle=\sum_{k=0}^{\infty}\sum_{l=0}^{\infty}a_{k,l}b_{k,l},\]
where $f=\sum_{k,l}a_{k,l}z_1^kz_2^l\in \mathfrak{D}$ and $g=\sum_{k,l}b_{k,l}z_1^kz_2^l\in A^2(\mathbb{D}^2)$.
We then consider the
Cauchy integral $C[\mu]=\int_{\TT^2}(1-e^{i\theta_1}z_1)^{-1}(1-e^{i\theta_2}z_2)^{-1}d\mu(\theta_1,\theta_2)$ of $\mu$, a measure of finite logarithmic energy with $\mathrm{supp}(\mu) \subset E$. A comparison with \eqref{logenergycoeffs} then reveals that
\[\|C[\mu]\|^2_{A^2(\mathbb{D}^2)}=\sum_{k=0}^{\infty}\sum_{l=0}^{\infty}\frac{|\hat{\mu}(k,l)|^2}{(k+1)(l+1)}<\infty\]
so that $C[\mu]$ induces a non-trivial element of $\mathfrak{D}^*$. On the other hand, since the measure
$\mu$ is supported on $\mathcal{Z}(f^*)$ by assumption, the functional induced by $C[\mu]$ annihilates
$[f]$, and so $f$ is not cyclic.
\end{proof}
The argument used in the proof of Proposition \ref{easyBrownShields} 
can be used to give another proof of the 
non-cyclicity of the function $f(z_1,z_2)=1-z_1z_2$ in $\mathfrak{D}$.
Namely, consider the probability measure $\mu_{\mathcal{Z}}$ on
$\mathbb{T}^2$ induced by the (normalized) {\it integration current}
associated with the variety $\mathcal{Z}(1-z_1z_2)\cap \mathbb{T}^2$
(see \cite[Chapter 2]{LeLGruBook} for the relevant definitions). A
quick computation reveals that
$\hat{\mu}_{\mathcal{Z}}(k,l)=\delta_{kl}$, so that
$C[\mu_{\mathcal{Z}}](z_1,z_2)=1/(1-z_1z_2)$, a function in the
Bergman space of the bidisk which satisfies
\[\left\langle z_1^kz_2^lf, C[\mu_{\mathcal{Z}}]\right\rangle=0\]
for all $k,l\geq 0$.

\section{Concluding remarks and open problems}
It appears to be a difficult task to characterize the cyclic elements of $\mathfrak{D}_{\alpha}$ for
$\alpha\leq 1$, and many basic questions remain.
For instance, it is natural to ask whether the {\it Brown-Shields conjecture} is true for functions on the bidisk.
\begin{prob}
Is the condition that $f\in \mathfrak{D}$ is outer and $\mathcal{Z}(f^*)$ has logarithmic capacity $0$ sufficient for $f$ to be cyclic?
\end{prob}
This question remains open for the Dirichlet space of the unit disk, and is widely considered to be a challenging problem.

A first step towards understanding cyclic functions in $\mathfrak{D}_{\alpha}$ might be to solve the following natural problem.
\begin{prob}
Characterize the {\it cyclic polynomials} $f\in \mathfrak{D}_{\alpha}$ for each $\alpha\leq 1$.
\end{prob}
An obvious necessary condition for $f$ to be cyclic is that $\mathcal{Z}(f)\cap \mathbb{D}^2=
\varnothing$, and if $f$ is a polynomial that does not vanish in $\mathbb{D}^2$, then $f$ is
cyclic because both $f$ and $1/f$
extend analytically to a larger polydisk.  But the problem appears to be open
for polynomials with $\mathcal{Z}(f)\cap \partial \mathbb{D}^2\neq \varnothing$: we would at least like to
identify the polynomials whose zero sets have positive capacity. We have proved that polynomials that are products of polynomials in one variable are cyclic, and so the zero sets associated with such functions must all have zero capacity.

As we have seen in our examples, it can happen that a polynomial
with a larger zero set, in the topological sense and in the sense of
measure, is cyclic in $\mathfrak{D}_{\alpha}$ for some $\alpha$,
while a polynomial with a smaller zero set is not. We have also
noted that a polynomial that {\it fails} to be cyclic in
$\mathfrak{D}_{\alpha}$ when $\alpha>1/2$ can be ``more" cyclic in
$\mathfrak{D}_{\alpha}$, for $\alpha<0$, than polynomials that are cyclic in all
$\mathfrak{D}_{\alpha}$. We mean this in the in the sense that
$\dist^2_{\mathfrak{D}_{\alpha}}(1,(1-z_1z_2)\cdot
\mathfrak{P}_n)\asymp C\varphi_{2\alpha}(n+1)$ while
$\dist^2_{\mathfrak{D}_{\alpha}}(1,(1-z_1)(1-z_2)\cdot
\mathfrak{P}_n)\asymp C\varphi_{\alpha}(n+1)$.
It would be interesting to develop a rigorous understanding of this
phenomenon.

\textit{Acknowledgments.}
The authors thank Stefan Richter for interesting conversations and helpful 
suggestions.


\begin{thebibliography}{1}

\bibitem{BCLSS13}C. B\'en\'eteau, A.A. Condori, C. Liaw, D. Seco, and
A.A. Sola, Cyclicity in Dirichlet-type spaces and extremal polynomials,
J. Anal. Math. (to appear).

\bibitem{BS84}L. Brown and A.L. Shields, Cyclic vectors in the Dirichlet space,
Trans. Amer. Math. Soc. {\bf 285} (1984), 269-304.

\bibitem{Hed88}H. Hedenmalm, Outer functions in function algebras on the
bidisc, Trans. Amer. Math. Soc. {\bf 306} (1988), 697-714.

\bibitem{HorBook}L. H\"ormander, An introduction to complex analysis in
several variables, Third ed., North-Holland Publishing Co., Amsterdam, 1990.

\bibitem{HoroOber}C. Horowitz and D. Oberlin, Restriction of {$H^p$} functions to the diagonal of {$\mathbb{U}^n$},
Indiana Univ. Math. J. {\bf 24} (1975), 767-772.

\bibitem{JR06}D. Jupiter and D. Redett, Multipliers on Dirichlet type spaces,
Acta Sci. Math. (Szeged) {\bf 72} (2006), 179-203.

\bibitem{KahBook}J.-P. Kahane, Some random series of functions, Second ed., Cambridge Univ. Press, 1985.

\bibitem{Kap94}H.T. Kaptano\u{g}lu, M\"obius-invariant Hilbert spaces in polydiscs, Pacific J. Math. {\bf 163} (1994), 337-360.

\bibitem{LeLGruBook}P. Lelong and L. Gruman, Entire functions of
several complex variables, Grundlehren der Mathematischen Wissenschaften {\bf 282}, Springer-Verlag, Berlin, 1986.

\bibitem{Man88}V. Mandrekar, The validity of Beurling theorems in
polydisks, Proc. Amer. Math. Soc. {\bf 103} (1988), 145-148.

\bibitem{MT13}X. Massaneda and P.J. Thomas, Non cyclic functions in the Hardy space of the bidisc with arbitrary decrease, available at http://arxiv.org/abs/1301.2622

\bibitem{RT10}D. Redett and J. Tung, Invariant subspaces in Bergman spaces
over the bidisk, Proc. Amer. Math. Soc. {\bf 138} (2010), 2425-2430.

\bibitem{RSslides}S. Richter and C. Sundberg, Cyclic vectors in the Drury-Arveson space, slides available at http://www.math.utk.edu/$\sim$richter

\bibitem{RudBook}W. Rudin, Function theory in polydiscs, W.A. Benjamin, Inc., New York-Amsterdam, 1969.

\bibitem{Tay66}G.D. Taylor, Multipliers on $D_{\alpha}$, Trans. Amer. Math. Soc.
{\bf 123} (1966), 229-240.
\end{thebibliography}
\end{document}